\newcommand{\tr}{\text{Tr}}
\newcommand\Z{{\mathbb Z}}
\newcommand\F{{\mathbb F}}
\newcommand\Tr{{\mathrm{Tr}}}
\newcommand\Norm{{\mathrm{Norm}}}
\newcommand\Cay{{\mathrm{Cay}}}
\newcommand\PG{\mathsf{PG}}
\newcommand\cQ{{\mathcal Q}}
\newcommand\cM{{\mathcal M}}
\newcommand\la{\langle}
\newcommand\ra{\rangle}
\newcommand\sgn{\textup{sgn}}
\newcommand{\KD}{\mathds{1}_{P^\perp}}   
\newcommand{\dualG}{{\widehat G}}
\newcommand{\herm}{\mathsf{H}}
\theoremstyle{plain}
\newtheorem{theorem}{Theorem}[section]
\newtheorem{problem}[theorem]{Problem}
\newtheorem{lemma}[theorem]{Lemma}
\newtheorem{corollary}[theorem]{Corollary}
\newtheorem{construction}[theorem]{Construction}
\newtheorem{result}[theorem]{Result}
\numberwithin{equation}{section}
\theoremstyle{remark}
\newtheorem{remark}[theorem]{Remark}
\let\@@pmod\pmod
\DeclareRobustCommand{\pmod}{\@ifstar\@pmods\@@pmod}
\def\@pmods#1{\mkern4mu({\operator@font mod}\mkern 6mu#1)}
\renewcommand\le{\leqslant}
\renewcommand\ge{\geqslant}
\begin{document}

\title{A New Infinite Family of Hemisystems of the Hermitian Surface}
\author{John Bamberg}
\address{ %
Centre for the Mathematics of Symmetry and Computation\\
School of Mathematics and Statistics\\
The University of Western Australia\\
35 Stirling Highway, Crawley, W.A. 6009, Australia}
\email{John.Bamberg@uwa.edu.au}
\thanks{The first author acknowledges the support of the 
Australian Research Council Future Fellowship FT120100036.}

\author{Melissa Lee}
\address{ %
Centre for the Mathematics of Symmetry and Computation\\
School of Mathematics and Statistics\\
The University of Western Australia\\
35 Stirling Highway, Crawley, W.A. 6009, Australia}
\email{melissa.lee@research.uwa.edu.au}
\thanks{The second author acknowledges the support of a 
Hackett Postgraduate Research Scholarship.}

\author{Koji Momihara}
\address{ %
Department of Mathematics\\
Faculty of Education\\
Kumamoto University\\
2-40-1 Kurokami, Kumamoto 860-8555, Japan}
\email{momihara@educ.kumamoto-u.ac.jp}
\thanks{The third author acknowledges the support by 
JSPS under Grant-in-Aid for Young Scientists (B) 25800093 and Scientific Research (B) 15H03636.}

\author{Qing Xiang}
\address{ %
Department of Mathematical Sciences\\
University of Delaware\\
Newark DE 19716, USA
}
\email{xiang@math.udel.edu}

\subjclass[2010]{05B25 (primary), 05E30, 51E12 (secondary)}
\keywords{Hemisystem, partial quadrangle, strongly regular graph}

\begin{abstract}
In this paper, we construct an infinite family of hemisystems of the Hermitian surface $\herm(3,q^2)$.
In particular, we show that for every odd prime power $q$ congruent to $3$ modulo $4$, there
exists a hemisystem of $\herm(3,q^2)$ admitting $C_{(q^3+1)/4} : C_3$.
\end{abstract}


\maketitle

\section{Introduction} A hemisystem of a generalized quadrangle of order $(q^2,q)$, $q$ odd, is a
set of lines $\mathcal{H}$ containing half of the lines on every point. Hemisystems are of interest
because they give rise to strongly regular graphs, partial quadrangles and 4-class imprimitive
cometric $Q$-antipodal association schemes that are not metric \cite{Dam:2013aa}. For a prime power
$q$, the \emph{classical} generalized quadrangle of order $(q^2,q)$ is the Hermitian surface
$\mathsf{H}(3,q^2)$ with automorphism group $\mathsf{P\Gamma U}(4,q)$. An \emph{$m$-cover} of a
generalized quadrangle is a set of lines such that every point is incident with $m$ lines from this
set. For instance, a \emph{spread} is a $1$-cover and a hemisystem of $\herm(3,q^2)$ has
$m=(q+1)/2$. It was shown by Segre \cite{Segre:1965aa} that the only nontrivial $m$-covers of
$\herm(3,q^2)$, $q$ odd, are hemisystems and he gave an example on $\herm(3,3^2)$. Bruen and
Hirschfeld \cite{Bruen:1978qf} showed that there are no nontrivial $m$-covers of $\herm(3,q^2)$ when
$q$ is even. Segre's example of a hemisystem remained the only example of a hemisystem for thirty
years, and so it was reasonable for Thas \cite{Thas:1995aa} in 1995 to pose a conjecture that
Segre's example was the only one. In 2005, Cossidente and Penttila \cite{Cossidente:2005aa}
disproved this conjecture by showing the existence of a hemisystem on $\herm(3,q^2)$, $q$ odd,
admitting $\mathsf{P\Omega}^-(4,q)$ for each odd prime power $q$. Cossidente and Penttila
\cite[Remark 4.4]{Cossidente:2005aa} also found by computer search a hemisystem of $\herm(3,7^2)$
with full stabilizer in $\mathsf{P\Gamma U}(4,7)$ a metacyclic group of order $516$, and a
hemisystem of $\herm(3, 9^2)$, with full stabilizer in $\mathsf{P\Gamma U}(4,9)$ a metacyclic group
of order $876$. Bamberg, Giudici and Royle \cite[Section 4.1]{Bamberg:2013aa} found that the pattern
continues (except, curiously, for $q$ congruent to $1$ modulo $12$) by finding for each $q\in
\{11,17,19, 23, 27\}$ a hemisystem of $\herm(3,q^2)$ admitting a cyclic group of order $q^2-q+1$. In
this paper, we construct an infinite family of hemisystems that generalize the examples above where
$q$ is congruent to $3$ modulo $4$.

\begin{theorem}\label{main}
 There is a hemisystem of $\herm(3,q^2)$ for every prime power $q \equiv  3\pmod{4}$, each admitting $C_{(q^3+1)/4} : C_3$.
\end{theorem}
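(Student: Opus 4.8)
The plan is to coordinatise $\herm(3,q^2)$ by means of the field $\F_{q^6}$ and to build the hemisystem out of cyclotomic classes, verifying the covering condition with Gauss sums. Realise $\herm(3,q^2)$ in $\PG(3,q^2)$ with a Hermitian form on $\F_{q^6}\oplus\F_{q^2}$ for which the norm-one torus $C:=\{z\in\F_{q^6}^{\ast}:z^{q^3+1}=1\}\cong C_{q^3+1}$ acts by multiplication on the $\F_{q^6}$-summand: then $C\le\mathsf{PGU}(4,q)$, it fixes a classical (Hermitian-curve) ovoid, it has exactly $q+1$ orbits on lines, each of full length $q^3+1$, and off the ovoid its point-orbits have length $q^3+1$ and are indexed by the norm $\Norm_{\F_{q^6}/\F_{q^3}}$. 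The $\F_{q^2}$-linear Frobenius $\sigma\colon x\mapsto x^{q^2}$ lies in $\mathsf{PGU}(4,q)$, has order $3$ and normalises $C$, giving $C:C_3$. Since $q\equiv 3\pmod 4$ we have $4\mid q^3+1$, so $C$ has a unique index-$4$ subgroup $M$, which $\sigma$ normalises; and as $q^2\equiv 1\pmod 4$ the map $\sigma$ acts trivially on $C/M\cong\Z/4\Z$ and merely permutes the $q+1$ line-orbits. Each $C$-orbit of lines splits into four $M$-orbits, and I will take $\mathcal{H}$ to be a $\sigma$-invariant union of exactly two of these four in each line-orbit, specified by a pattern $D_i\in\binom{\Z/4\Z}{2}$ for $1\le i\le q+1$ that is constant on the $\sigma$-orbits of indices. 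Such an $\mathcal{H}$ then admits $M:C_3=C_{(q^3+1)/4}:C_3$ and has cardinality $\tfrac12(q+1)(q^3+1)$; what remains is to choose the pattern so that every point lies on exactly $(q+1)/2$ of its lines.

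Because $\mathcal{H}$ is $M$-invariant, this covering condition need only be checked on representatives of the $M$-orbits on points. Writing the incidence statement ``the line through the point $P$ in the $i$-th line-orbit lies in $\mathcal{H}$'' as an element of the group algebra $\Z[C]$ and applying the characters of $C$, the $M$-invariance of $\mathcal{H}$ annihilates every character of order not dividing $4$: the principal character reproduces the count $(q+1)/2$ (this is where $|D_i|=2$ is used), and the quadratic and the two quartic characters $\chi$ of $C/M$ each produce an identity of the shape $\sum_{i=1}^{q+1}\varepsilon_i\,\theta_i(P,\chi)=0$, in which $\varepsilon_i=\varepsilon_i(D_i)\in\{0,\pm1,\pm i,\pm(1\pm i)\}$ is determined by the chosen pattern and $\theta_i(P,\chi)$ is a character sum recording which points of the $i$-th line-orbit are collinear with $P$. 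Unwinding the collinearity relations in the $\F_{q^6}$-model — each of them reduces, after the norm form is brought in, to a quadratic condition over $\F_q$ in a single $\F_{q^2}$-variable — turns every $\theta_i(P,\chi)$ into a Gauss sum of $\chi$ over $\F_{q^6}$ (equivalently, via Davenport--Hasse, over the subfields $\F_{q^2}$ or $\F_{q^3}$).

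This Gauss-sum evaluation together with the accompanying sign analysis is the decisive step, and the one I expect to be the main obstacle. The hypothesis $q\equiv 3\pmod 4$ is used exactly here: it forces $p\equiv 3\pmod 4$, so the quartic Gauss sums that occur are \emph{semi-primitive} and hence have explicit closed forms — equal, up to an unambiguous sign fixed by Stickelberger's congruence, to $\pm$ a prime-power times a fourth root of unity — while the quadratic Gauss sum over the even-degree field $\F_{q^6}$ is simply $\pm q^3$. Substituting these values, the task collapses to a finite amount of elementary number theory: checking that the Gaussian-integer coefficients $\varepsilon_i$ can be matched to the computed values of $\theta_i(P,\chi)$ so that the sums vanish simultaneously for all orbit representatives $P$ and all $\chi$, and that a $\sigma$-invariant pattern $D_\bullet$ realising this exists. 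The reason no such construction is available for $q\equiv 1\pmod 4$ is precisely that the quartic Gauss sums are then not semi-primitive and admit no clean evaluation, which is consistent with the gap at $q\equiv 1\pmod{12}$ observed by Bamberg, Giudici and Royle. Once the pattern is verified, $\mathcal{H}$ is a hemisystem of $\herm(3,q^2)$ invariant under $C_{(q^3+1)/4}:C_3$, proving Theorem~\ref{main}.
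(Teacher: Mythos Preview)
Your proposal is a strategy sketch rather than a proof, and the gap is precisely at the point you yourself flag as ``the main obstacle'': you never specify the pattern $D_i$ and never carry out the verification. The paper shows that neither step is routine.

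First, the choice of which two of the four $M$-orbits to take in each $C$-orbit is not a matter of ``a finite amount of elementary number theory'' that one can settle after the Gauss sums are evaluated. The paper works in the dual $\cQ^-(5,q)$ model, and there the index set is governed by a highly specific partition $S_1\cup S_2$ of the Singer difference set $S\subset\Z_N$ (with $N=q^2+q+1$), which in turn comes from a partition of a conic in $\PG(2,q)$ first discovered in the Cameron--Liebler paper of Feng--Momihara--Xiang. In your language only the two patterns $\{0,3\}$ and $\{1,2\}$ out of the six elements of $\binom{\Z/4\Z}{2}$ are used, and the assignment of $C$-orbits to these two patterns is dictated by that conic partition. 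Nothing in your outline suggests this ingredient, and without it there is no reason to expect the vanishing conditions to be simultaneously satisfiable across all point-orbit representatives.

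Second, the Gauss sums that actually arise are not only the semi-primitive quartic ones $G_{q^6}(\chi_4)$. Because the construction lives inside cyclotomic classes of index $4N$, the character computation forces one to evaluate the mixed sums $G_{q^6}(\chi_4\chi_N^{h})$ for all $h$, where $\chi_N$ has order $N=q^2+q+1$. These are not semi-primitive; the paper handles them via the Davenport--Hasse product formula, obtaining (its Theorem~2.7 and Corollary~2.8)
\[
G_{q^6}(\chi_4\chi_m)=\rho_q\,\frac{q^3\,G_{q^3}(\chi_2'{\chi'}_m^{2})}{G_{q^3}(\chi_2')},
\]
and then feeds this into a separate character-value theorem for the conic-based set (Theorem~3.4). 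Your claim that Stickelberger-type evaluation of semi-primitive quartic sums suffices is therefore an underestimate of what is needed.

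In short, the architecture you describe is compatible with the paper's (modulo the harmless switch between $\herm(3,q^2)$ and its dual $\cQ^-(5,q)$), but the two decisive inputs --- the conic partition that selects the orbits, and the evaluation of the non-semi-primitive Gauss sums $G_{q^6}(\chi_4\chi_N^{h})$ --- are absent from your proposal, and without them the argument does not close.
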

 
We also prove that this infinite family of hemisystems yields new hemisystems beyond the small known
examples. We note from \cite{Bamberg:2013aa} that for $q=3$, our construction gives a hemisystem
that is projectively equivalent to Segre's hemisystem, and for $q=7$, the hemisystem is equivalent
to that given in \cite[Remark 4.4]{Cossidente:2005aa}.

Our construction is based on initially identifying a hemisystem of $\herm(3,q^2)$ with its
dual\footnote{By interchanging the roles of points and lines of a generalized quadrangle we obtain
another generalized quadrangle, the \emph{dual}.} set of points of the elliptic quadric
$\cQ^-(5,q)$. A hemisystem (of points) of $\cQ^-(5,q)$ in this context is defined as a set of points
$\mathcal{M}$ containing half of the points on every line. The technique used to construct these
hemisystems is remarkably similar to the technique used in \cite{Feng:2015aa} to construct
$\frac{(q^2-1)}{2}$-tight sets of $\cQ^+(5,q)$, otherwise known via the Klein correspondence as
\emph{Cameron-Liebler line classes} of $\PG(3,q)$ with parameter $\frac{(q^2-1)}{2}$. The
construction in this paper is essentially a cyclotomic construction, that is, the hemisystem of
$\cQ^-(5,q)$ we are going to construct is a union of cyclotomic classes of
$\mathbb{F}_{q^6}^*=\mathbb{F}_{q^6}\setminus\{0\}$. The first step is to give a finite field model
of $\cQ^-(5,q)$ for $q\equiv 3\pmod{4}$: we view $\mathbb{F}_{q^6}$ as a $6$-dimensional vector
space over $\mathbb{F}_q$ and define $\cQ^-(5,q)$ (using the underlying vectors instead of
projective points) as the nonzero vectors in the zero-set of the quadratic form
$\tr_{q^3/q}(x^{q^3+1})$ defined on $\mathbb{F}_{q^6}$, where $\tr_{q^3/q}$ is the trace from
$\F_{q^3}$ to $\F_q$. Note that in this setting, $\cQ^-(5,q)$ is also a union of $4(q+1)$ cyclotomic
classes of index $4(q^2+q+1)$ of $\mathbb{F}_{q^6}^*$. Of course, this field model and the
cyclotomic interpretation of $\cQ^-(5,q)$ are well known. In order to construct a hemisystem of
$\cQ^-(5,q)$, we need to choose half of the cyclotomic classes involved in the definition of
$\cQ^-(5,q)$. The difficulty lies in deciding which half to choose. We overcome this difficulty by
using a partition of a conic in $\PG(2,q)$ first discovered in \cite{Feng:2015aa}. This partition of
a conic in $\PG(2,q)$ gives us a way to choose half of the cyclotomic classes involved in the
definition of $\cQ^-(5,q)$, yielding a hemisystem of points of $\cQ^-(5,q)$. The proof that our
choice indeed works for each $q \equiv 3\pmod{4}$ relies on computations of (additive) character
values of the subset of chosen vectors using Gauss sums.

The paper is structured as follows: in Section 2, we give the requisite background on generalized
quadrangles (particularly the elliptic quadric $\cQ^-(5,q)$), $m$-ovoids, strongly regular graphs,
Cayley graphs, and Gauss sums. Then in Sections \ref{section:conic} and \ref{section:construction}
we work towards describing the new hemisystems and giving a proof of Theorem \ref{main}. Finally, we
show that the hemisystems we have constructed are indeed new.

\section{Preliminaries}\label{section:prelim}

\subsection{Generalized quadrangles and $m$-ovoids}
A generalized quadrangle of order $(s,t)$ is a point-line incidence structure obeying the following axioms.
\begin{itemize}
 \item Any two points are incident with at most one line.
\item Every line is incident with $s+1$ points.
\item Every point is incident with $t+1$ lines.
\item Given a point $P$ and a line $\ell$ that are not incident, there is a unique point $Q$ on
  $\ell$ that is collinear to $P$.
\end{itemize}

The dual of a generalized quadrangle of order $(s,t)$ is a generalized quadrangle of order $(t,s)$.
The family of generalized quadrangles that we are mainly interested in are the Hermitian surfaces
$\herm(3,q^2)$, where $q$ is a prime power. We define $\herm(3,q^2)$ to be comprised of the set of
totally isotropic points and lines of a non-degenerate Hermitian form on $\PG(3,q^2)$; which results
in a generalized quadrangle of order $(q^2,q)$. The dual of $\herm(3,q^2)$ is a generalized
quadrangle of order $(q,q^2)$, isomorphic to the geometry of totally singular points and lines of an
elliptic quadric $\cQ^-(5,q)$ arising from a non-singular quadratic form of minus type on
$\PG(5,q)$.

We will be working almost exclusively in this dual setting. Let $m\geq 1$ be an integer. A set of
points is said to be an \emph{$m$-ovoid} of $\cQ^-(5,q)$ if every line of $\cQ^-(5,q)$ meets the set
in $m$ points. Note that an $m$-ovoid is the dual concept to an $m$-cover of lines (i.e., upon
interchanging the roles of points and lines), and in particular from the above results, a nontrivial
$m$-ovoid of $\cQ^-(5,q)$ must have $m= (q+1)/2$. The following lemma follows directly from
\cite[Lemma 1]{Bamberg:2007aa}.

\begin{lemma}\label{res_1} 
Let $\mathcal{M}$ be a set of $m(q^3+1)$ points in $\cQ^{-}(5,q)$. Then, $\mathcal{M}$ is an
$m$-ovoid of $\cQ^{-}(5,q)$ if and only if
\begin{align*}
|P^\perp\cap\mathcal{M}|=\begin{cases}(m-1)(q^2+1)+1,\quad &\textup{if $P\in\mathcal{M}$},\\
m(q^2+1),\quad & \textup{otherwise}.
\end{cases}
\end{align*}
Here $\perp$ is the polarity defined by the elliptic quadric $\cQ^-(5,q)$.
\end{lemma}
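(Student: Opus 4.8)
The plan is to prove both implications by elementary double counting; the statement is in fact a direct reformulation of \cite[Lemma 1]{Bamberg:2007aa}. Two standard facts about the elliptic quadric underpin everything: (i) for distinct points $R,P$ of $\cQ^-(5,q)$ one has $R\in P^\perp$ if and only if $R$ is collinear with $P$ (equivalently, the line $RP$ lies on the quadric); and (ii) by the fourth generalized quadrangle axiom, a point $R\notin\ell$ is collinear with exactly one point of any line $\ell$ of $\cQ^-(5,q)$. Together with $P\in P^\perp$, these say that $P^\perp\cap\cQ^-(5,q)$ is the union of the $q^2+1$ lines through $P$, pairwise meeting only in $P$. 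I will also use the parameters of $\cQ^-(5,q)$ throughout: lines carry $q+1$ points, points lie on $q^2+1$ lines, and $|\cQ^-(5,q)|=(q+1)(q^3+1)$.

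For the forward direction, assume $\mathcal{M}$ is an $m$-ovoid and fix $P\in\cQ^-(5,q)$. Each of the $q^2+1$ lines through $P$ meets $\mathcal{M}$ in exactly $m$ points, and distinct such lines share only $P$. Hence if $P\notin\mathcal{M}$ each of these $m$-point sets avoids $P$, giving $|P^\perp\cap\mathcal{M}|=m(q^2+1)$; and if $P\in\mathcal{M}$ each line contributes $m-1$ further points, giving $|P^\perp\cap\mathcal{M}|=(m-1)(q^2+1)+1$.

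For the converse, assume the displayed intersection numbers hold, fix a line $\ell$ of $\cQ^-(5,q)$, and set $a=|\ell\cap\mathcal{M}|$; the goal is $a=m$. I would evaluate $S:=\sum_{P\in\ell}|P^\perp\cap\mathcal{M}|$ in two ways. Summing over $R\in\mathcal{M}$ and using (i), (ii) and $P\in P^\perp$: a point of $\mathcal{M}\cap\ell$ lies in $P^\perp$ for all $q+1$ choices of $P\in\ell$, while a point of $\mathcal{M}\setminus\ell$ lies in exactly one, so $S=a(q+1)+(|\mathcal{M}|-a)=m(q^3+1)+aq$. Summing instead over $P\in\ell$ and using the hypothesis: $S=a\bigl((m-1)(q^2+1)+1\bigr)+(q+1-a)\,m(q^2+1)=(q+1)m(q^2+1)-aq^2$. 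Equating the two expressions and simplifying with $(q+1)(q^2+1)-(q^3+1)=q(q+1)$ yields $mq(q+1)=aq(q+1)$, hence $a=m$, so every line of $\cQ^-(5,q)$ meets $\mathcal{M}$ in exactly $m$ points.

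There is no serious obstacle; the one point requiring care is fact (i) — that on the quadric the polarity relation $\perp$ restricted to quadric points coincides with collinearity — since it is what licenses expanding the sum $S$ by point–point incidences, together with keeping the parameters $(s,t)=(q,q^2)$ straight in both counts.
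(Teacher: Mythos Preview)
Your double-counting argument is correct and complete for points $P$ on the quadric, and the paper itself offers no proof beyond citing \cite[Lemma~1]{Bamberg:2007aa}. However, the lemma as the paper uses it (to derive Result~\ref{res_charD}) needs $P$ to range over \emph{all} of $\PG(5,q)$: Result~\ref{res_charD} explicitly says ``for any $P\in\PG(5,q)$'', and the sentence immediately following the lemma invokes projective two-intersection sets, which are defined with respect to every hyperplane. Your forward direction fixes $P\in\cQ^-(5,q)$, so the case $P\notin\cQ^-(5,q)$ (where ``otherwise'' must still give $m(q^2+1)$) is missing.

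The repair is short and in the same spirit. For $P$ off the quadric, $P^\perp\cap\cQ^-(5,q)$ is a non-degenerate parabolic quadric $\cQ(4,q)$, a generalized quadrangle of order $(q,q)$ with $(q+1)(q^2+1)$ lines, each of which is a line of $\cQ^-(5,q)$ and hence meets $\mathcal{M}$ in $m$ points; every point of this section lies on exactly $q+1$ such lines. Double counting incident pairs $(R,\ell)$ with $R\in\mathcal{M}\cap P^\perp$ and $\ell$ a line of $\cQ(4,q)$ gives $(q+1)\,|\mathcal{M}\cap P^\perp|=m(q+1)(q^2+1)$, hence $|\mathcal{M}\cap P^\perp|=m(q^2+1)$. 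Your converse is unaffected, since you only invoke the hypothesis for $P\in\ell\subset\cQ^-(5,q)$.
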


 The above result makes it possible to use projective two-intersection sets for constructing
 $m$-ovoids. A two-intersection set $\mathcal{K}$ is a set of points in $\PG(n,q)$ such that every
 hyperplane of $\PG(n,q)$ is incident with either $h_1$ or $h_2$ points of $\mathcal{K}$. We call
 $h_1$ and $h_2$ \emph{intersection numbers}. A related concept to a two-intersection set is an
 intriguing set. A set $\mathcal{I}$ of points of a generalized quadrangle is called
 \emph{intriguing} if there are integers $k_1$ and $k_2$ such that the number of points of
 $\mathcal{I}$ collinear to an arbitrary point $P$ of the generalized quadrangle is $k_1$ if $P\in
 \mathcal{I}$, and $k_2$ otherwise.

\subsection{Strongly regular graphs and Cayley graphs}

A $(v,k,\lambda,\mu)$ {\it strongly regular graph} is a simple undirected regular graph on $v$
vertices with valency $k$ satisfying the following: for any two adjacent (resp. nonadjacent)
vertices $x$ and $y$ there are exactly $\lambda$ (resp. $\mu$) vertices adjacent to both $x$ and
$y$. It is known that a graph with valency $k$, not complete or edgeless, is strongly regular if and
only if its adjacency matrix has exactly two restricted eigenvalues. Here, we say that an eigenvalue
of the adjacency matrix is {\it restricted} if it has an eigenvector perpendicular to the all-ones
vector.

Let $G$ be a finite abelian group and $D$ be an inverse-closed subset of $G\setminus\{0\}$. We
define a graph $\Cay(G,D)$ with the elements of $G$ as its vertices; two vertices $x$ and $y$ are
adjacent if and only if $x-y\in D$. The graph $\Cay(G,D)$ is called a {\it Cayley graph} on $G$ with
connection set $D$. The eigenvalues of $\Cay(G,D)$ are given by $\psi(D)$, $\psi\in \dualG$, where
$\dualG$ is the \emph{dual group} consisting of all characters of $G$. Using the aforementioned
spectral characterization of strongly regular graphs, we see that $\Cay(G,D)$ with connection set
$D$($\not=\varnothing,G$) is strongly regular if and only if $\psi(D)$, $\psi\in
\dualG\setminus\{1\}$, take exactly two values, say $\alpha_1$ and $\alpha_2$. We note that if
$\Cay(G,D)$ is strongly regular with two restricted eigenvalues $\alpha_1$ and $\alpha_2$, then the
sets $\{\psi\in \dualG\colon \psi(D)=\alpha_i\}$, $i=1,2$, also form connection sets of strongly
regular Cayley graphs on $\dualG$; one is the complement of another in $\dualG\setminus\{1\}$, and
each of these sets is called the {\it dual} of $D$.

For a nonzero vector $x\in \F_q^6$, we use $\langle x\rangle$ to denote the projective point in
$\PG(5,q)$ corresponding to the one-dimensional subspace over $\F_q$ spanned by $x$. In this paper,
we will use the following relation between certain intriguing sets and strongly regular graphs: For
an intriguing set $\cM$ in $\cQ^{-}(5,q)$, define $D:=\{\lambda x\colon \lambda\in\F_q^\ast,\; \la x
\ra\in \cM\}$, which is a subset of $(\F_{q}^6,+)$. Then the Cayley graph with vertex set
$(\F_q^6,+)$ and connection set $D$ is strongly regular. Its restricted eigenvalues can be
determined as follows. Let $\psi$ be a nontrivial additive character of $\F_q^6$. Then $\psi$ is
principal on a unique hyperplane $P^\perp$ for some $P\in \PG(5,q)$. We have
\begin{align*}
\psi(D)&=\sum_{\la x\ra\in\cM}\sum_{\lambda\in\F_q^*}\psi(\lambda x)
=\sum_{\la x\ra\in\cM}(q\KD (\la x\ra)-1)\\
&=-|\cM|+q|P^\perp\cap\cM|
=\begin{cases}-q^3+m(q-1),\; &\textup{if $P\in\mathcal{M}$},\\
m(q-1),\; & \textup{otherwise,}\end{cases}
\end{align*}
where for a subset $S$ of the points, $\mathds{1}_{S}$ is the characteristic function taking value 1
on elements of $S$ and value $0$ elsewhere. Conversely, for each hyperplane $P^\perp$ of $\PG(5,q)$,
we can find a nontrivial character $\psi$ that is principal on $P^\perp$, and the size of
$P^\perp\cap \cM$ can be computed from $\psi(D)$. Therefore, the character values of $D$ reflect the
intersection properties of $\cM$ with the hyperplanes of $\PG(5,q)$. To summarize, we have the
following result.

\begin{result}\label{res_charD} 
Let $\mathcal{M}$ be a set of $m(q^3+1)$ points in $\cQ^{-}(5,q)$. 
Define
\begin{equation}\label{eq:defD}
D:=\{\lambda x\colon \lambda\in\F_q^\ast,\;\la x \ra\in \cM\}\subset (\F_q^6,+).
\end{equation}
Then, $\mathcal{M}$  is an  $m$-ovoid of  $\cQ^{-}(5,q)$ if and only if 
for any $P\in \PG(5,q)$
\begin{align*}
\psi(D)
=\begin{cases}-q^3+m(q-1),\; &\textup{if $P\in\mathcal{M}$},\\
m(q-1),\; & \textup{otherwise,}\end{cases}
\end{align*}
where $\psi$ is any nontrivial character of $\F_q^6$ that is principal on the hyperplane $P^\perp$.
\end{result}

\subsection{A finite field model of the elliptic quadric $\cQ^-(5,q)$}

We will use the following model of $\cQ^{-}(5,q)$. We view $\F_{q^6}$ as a 6-dimensional vector
space over $\F_q$. We define the trace function $\tr_{q^n/q}:\mathbb{F}_{q^n} \rightarrow
\mathbb{F}_q$ by $\tr_{q^n/q}(x) = x + x^q + x^{q^2} +\cdots + x^{q^{n-1}}$. Define a quadratic form
$Q: \F_{q^6}\rightarrow \F_{q}$ by
\[
Q(x):=\tr_{q^3/q}(x^{q^3+1}).
\]
The quadratic form $Q$ is clearly elliptic and the projective points corresponding to the nonzero
vectors of $\{x\in \F_{q^6}\mid Q(x)=0\}$ form an elliptic quadric. This will be our model for
$\cQ^-(5,q)$. Note that for a point $P=\la x\ra$, its polar hyperplane $P^\perp$ is given by
$P^\perp=\{\la y \ra\colon \tr_{q^6/q}(yx^{q^3})=0\}$.

Let $\psi_{\F_{q^6}}$ and $\psi_{\F_q}$ be the canonical additive characters of $\F_{q^6}$ and
$\F_q$, respectively. Then, each nontrivial additive character $\psi_a$ of $\F_{q^6}$ has the form
\begin{equation}\label{fieldchara}
\psi_{a}(x)=\psi_{\F_{q^6}}(ax)=\psi_{\F_{q}}(\Tr_{q^6/q}(ax)),\; \; x\in \F_{q^6},
\end{equation}
where $a\in \F_{q^6}^\ast$. Since $\psi_{a}$ is principal on the hyperplane $\{\la x \ra:
\Tr_{q^6/q}(ax)=0\}=P^{\perp}$ with $P=\la a^{q^3}\ra$, the character sum condition in
Result~\ref{res_charD} can be more explicitly rewritten as
\begin{equation}\label{eqn:polarD}
\psi_{a}(D)
=\begin{cases}-q^3+m(q-1),\quad &\textup{if $a^{q^3}\in D$},\\
m(q-1),\quad & \textup{otherwise.}\end{cases}
\end{equation}

\subsection{Gauss sums} We need some preparation for computing (additive) character values of a
subset of vectors of $\F_{q^m}$. For a multiplicative character $\chi$ and the canonical additive
character $\psi$ of $\F_q$, define the {\it Gauss sum} by
\[
G_q(\chi)=\sum_{x\in \F_q^\ast}\chi(x)\psi(x).
\]
The following are some basic properties of Gauss sums:
\begin{enumerate}
\item[(i)] $G_q(\chi)\overline{G_q(\chi)}=q$ if $\chi$ is nontrivial;
\item[(ii)] $G_q(\chi^{-1})=\chi(-1)\overline{G_q(\chi)}$;
\item[(iii)] $G_q(\chi)=-1$ if $\chi$ is trivial.
\end{enumerate}
Let $\gamma$ be a fixed primitive element of $\F_q$ and $k$ a positive integer dividing $q-1$. For
$0\le i\le k-1$ we set $C_i^{(k,q)}=\gamma^i C_0$, where $C_0$ is the subgroup of index $k$ of
$\F_q^\ast$. The {\it Gauss periods} associated with these cyclotomic classes are defined by
$\psi(C_i^{(k,q)}):=\sum_{x\in C_i^{(k,q)}}\psi(x)$, $0\le i\le k-1$, where $\psi$ is the canonical
additive character of $\F_q$. As described in the introduction, since we take a union of cyclotomic
classes of index $k=4(q^2+q+1)$ of $\F_{q^6}$ as a subset $D$ of \eqref{eq:defD}, we need to compute
a sum of Gauss periods. By orthogonality of characters, the Gauss periods can be expressed as a
linear combination of Gauss sums:
\begin{equation}
\psi(C_i^{(k,q)})=\frac{1}{k}\sum_{j=0}^{k-1}G_q(\chi^{j})\chi^{-j}(\gamma^i), \; 0\le i\le k-1,
\end{equation}
where $\chi$ is any fixed multiplicative character of order $k$ of $\F_q$.  

\begin{theorem}\label{thm:Yama}{\em (\cite[Theorem~1]{Yama})}
Let $\chi$ be a nontrivial multiplicative character of $\F_{q^m}$ and $\chi'$ be its restriction to
$\F_q$. Take a system $L$ of representatives of $\F_{q^m}^\ast/\F_q^\ast$ such that $\Tr_{q^m/q}$
maps $L$ onto $\{0,1\}\subset\F_q$. Partition $L$ into two parts:
\[
L_0=\{x\in L\colon \Tr_{q^m/q}(x)=0\} \, \, \mbox{and}\, \, L_1=\{x\in L\colon \Tr_{q^m/q}(x)=1\}.
\] 
Then, 
\[
\sum_{x\in L_1}\chi(x)=\left\{
\begin{array}{ll}
G_{q^m}(\chi)/G_q(\chi'), & \mbox{ if   $\chi'$ is nontrivial,}\\
-G_{q^m}(\chi)/q,& \mbox{ otherwise}. 
 \end{array}
\right.
\]
\end{theorem}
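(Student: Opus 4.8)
The plan is to reduce the claimed identity to a short Gauss-sum computation. The first step is to pin down the set $L_1$. For a nonzero $x\in\F_{q^m}$, the coset $x\F_q^\ast$ consists of the elements $\lambda x$ with $\lambda\in\F_q^\ast$, and since $\Tr_{q^m/q}(\lambda x)=\lambda\,\Tr_{q^m/q}(x)$, on this coset the trace is either identically $0$ (when $\Tr_{q^m/q}(x)=0$) or takes each nonzero value of $\F_q$ exactly once (when $\Tr_{q^m/q}(x)\ne0$); in particular no element of a nonzero-trace coset has trace $0$. Consequently a system $L$ as in the statement exists: from each coset inside $\ker\Tr_{q^m/q}$ one may pick any (necessarily trace-$0$) representative, and from each remaining coset one must pick its unique trace-$1$ element. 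Hence $L_0$ involves a genuine choice but $L_1$ is forced, namely $L_1=\{y\in\F_{q^m}^\ast\colon\Tr_{q^m/q}(y)=1\}$, independent of $L$, and so $\sum_{x\in L_1}\chi(x)=\sum_{y\in\F_{q^m}^\ast,\ \Tr_{q^m/q}(y)=1}\chi(y)$.

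Next I would detect the trace-one condition with additive characters. Using orthogonality of the additive characters of $\F_q$ together with the identity $\psi_{\F_q}(a\,\Tr_{q^m/q}(y))=\psi_{\F_{q^m}}(ay)$, valid for $a\in\F_q$ by $\F_q$-linearity of the trace and the definition of the canonical characters, one gets
\[ \sum_{\substack{y\in\F_{q^m}^\ast\\ \Tr_{q^m/q}(y)=1}}\chi(y)=\frac1q\sum_{a\in\F_q}\psi_{\F_q}(-a)\sum_{y\in\F_{q^m}^\ast}\chi(y)\,\psi_{\F_{q^m}}(ay). \]
The inner sum for $a=0$ is $\sum_{y\in\F_{q^m}^\ast}\chi(y)=0$ since $\chi$ is nontrivial, so that term vanishes. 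For $a\ne0$ the substitution $z=ay$ gives $\sum_{y\in\F_{q^m}^\ast}\chi(y)\,\psi_{\F_{q^m}}(ay)=\overline{\chi(a)}\,G_{q^m}(\chi)$, and since $a\in\F_q^\ast$ we may replace $\chi(a)$ by $\chi'(a)$. Substituting $a\mapsto-a$ in the remaining sum over $a$ then yields
\[ \sum_{\substack{y\in\F_{q^m}^\ast\\ \Tr_{q^m/q}(y)=1}}\chi(y)=\frac{G_{q^m}(\chi)}{q}\,\overline{\chi'(-1)}\sum_{a\in\F_q^\ast}\overline{\chi'(a)}\,\psi_{\F_q}(a)=\frac{G_{q^m}(\chi)}{q}\,\overline{\chi'(-1)}\,G_q(\overline{\chi'}). \]

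Finally I would simplify the right-hand side using the basic Gauss-sum facts (i)--(iii). If $\chi'$ is nontrivial, then (ii) gives $G_q(\overline{\chi'})=G_q(\chi'^{-1})=\chi'(-1)\overline{G_q(\chi')}$; since $\chi'(-1)=\pm1$ this reduces $\overline{\chi'(-1)}\,G_q(\overline{\chi'})$ to $\overline{G_q(\chi')}$, which by (i) equals $q/G_q(\chi')$, so the sum is $G_{q^m}(\chi)/G_q(\chi')$. If $\chi'$ is trivial, then $\overline{\chi'(-1)}=1$ and $\sum_{a\in\F_q^\ast}\overline{\chi'(a)}\,\psi_{\F_q}(a)=\sum_{a\in\F_q^\ast}\psi_{\F_q}(a)=-1$, so the sum is $-G_{q^m}(\chi)/q$, as claimed. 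The computation is routine throughout; the only points needing care are the forced identification of $L_1$ in the first step and the correct bookkeeping of conjugates and the factor $\chi'(-1)$ when converting $\overline{G_q(\chi')}$ into $q/G_q(\chi')$ via (i).
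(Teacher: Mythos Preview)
Your proof is correct. The identification of $L_1$ with the set of trace-one elements is exactly right and is the only subtle point; the remainder is a clean standard orthogonality-and-substitution argument, and your bookkeeping with $\chi'(-1)$ and the conversion $\overline{G_q(\chi')}=q/G_q(\chi')$ via property~(i) is accurate.

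As for comparison: the paper does not actually prove this theorem. It is quoted verbatim from Yamamoto--Yamada with a citation and no argument, so there is no in-paper proof to set yours against. Your write-up therefore supplies a complete, self-contained proof where the paper offers none; if you want to match the original source, the Yamamoto--Yamada argument is essentially the same additive-character detection you give here.
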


\begin{theorem}\label{thm:semiprim}{\em (\cite[Theorem~11.6.3]{BEW})}
Let $p$ be a prime. Suppose that $m>2$ and $p$ is semi-primitive modulo $m$, i.e., there exists a
positive integer $s$ such that $p^s\equiv -1\pmod{m}$. Choose $s$ minimal and write $f=2st$ for any
positive integer $t$. Let $\chi_m$ be a multiplicative character of order $m$ of $\F_{p^f}$. Then,
\[
p^{-f/2}G_{p^f}(\chi_m)=
\left\{
\begin{array}{ll}
(-1)^{t-1},&  \mbox{if $p=2$,}\\
(-1)^{t-1+(p^s+1)t/m},&  \mbox{if $p>2$. }
 \end{array}
\right.
\]
\end{theorem}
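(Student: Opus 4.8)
The plan is to prove Theorem~\ref{thm:semiprim} in two stages: first reduce the general exponent $f=2st$ to the base case $t=1$ by means of the Hasse--Davenport lifting relation, and then evaluate the base Gauss sum over $\F_{p^{2s}}$ by a direct coset computation that uses semi-primitivity.

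\emph{Reduction to $t=1$.} Put $q_0=p^s$, so that $m\mid q_0+1\mid q_0^2-1$ and $\F_{p^{2s}}$ already contains the $m$-th roots of unity. The norm map $\Norm_{p^f/p^{2s}}\colon \F_{p^f}^\ast\to\F_{p^{2s}}^\ast$ is surjective with kernel of order $(p^f-1)/(p^{2s}-1)$; since $m\mid p^{2s}-1$, a short divisibility check shows every multiplicative character of $\F_{p^f}$ of order $m$ is trivial on that kernel, hence equals the lift $\eta\circ\Norm_{p^f/p^{2s}}$ of a unique order-$m$ character $\eta$ of $\F_{p^{2s}}$. The Hasse--Davenport relation (see \cite{BEW}) then gives $G_{p^f}(\chi_m)=(-1)^{t-1}\,G_{p^{2s}}(\eta)^{t}$, so it suffices to prove $G_{p^{2s}}(\eta)=\varepsilon\,q_0$ with $\varepsilon=1$ when $p=2$ and $\varepsilon=(-1)^{(q_0+1)/m}$ when $p$ is odd; taking $t$-th powers and dividing by $p^{f/2}=q_0^{t}$ yields the displayed formula. (Note $\varepsilon$ will turn out not to depend on the choice of $\chi_m$, which is exactly why all order-$m$ characters give the same Gauss sum here.)

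\emph{The case $t=1$.} The crucial point is that $\eta$ is trivial on $\F_{q_0}^\ast$: its restriction to this index-$(q_0+1)$ subgroup has order $m/\gcd(m,q_0+1)=1$ because $m\mid q_0+1$. Partitioning $\F_{p^{2s}}^\ast=\F_{q_0^2}^\ast$ into cosets of $\F_{q_0}^\ast$ and using $\Tr_{q_0^2/q_0}(cy)=c\,\Tr_{q_0^2/q_0}(y)$ together with $\sum_{c\in\F_{q_0}^\ast}\psi_{\F_{q_0}}(c\beta)\in\{q_0-1,\,-1\}$, the Gauss sum collapses to $G_{p^{2s}}(\eta)=q_0\sum_{\la y\ra}\eta(y)$, the sum running over those coset representatives $y$ with $\Tr_{q_0^2/q_0}(y)=0$ (the full coset-sum $\sum_{\la y\ra}\eta(y)$ vanishes, since $\eta$ is nontrivial and trivial on $\F_{q_0}^\ast$). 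For $y\neq 0$ the condition $y^{q_0}=-y$, i.e.\ $y^{q_0-1}=-1$, singles out one coset of $\F_{q_0}^\ast$: it is $\F_{q_0}^\ast$ itself when $p=2$, and $\omega^{(q_0+1)/2}\F_{q_0}^\ast$ when $p$ is odd, where $\omega$ generates $\F_{q_0^2}^\ast$ and $-1=\omega^{(q_0^2-1)/2}$. Hence $G_{p^{2s}}(\eta)=q_0$ (for $p=2$), respectively $G_{p^{2s}}(\eta)=q_0\,\eta(\omega^{(q_0+1)/2})$ (for $p$ odd); this is consistent with $|G_{p^{2s}}(\eta)|=q_0$ from property~(i). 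Writing $\eta(\omega)=\zeta_m^{a}$ with $\gcd(a,m)=1$, one checks in the two subcases ($m$ even, $m$ odd) that $\zeta_m^{a(q_0+1)/2}=(-1)^{(q_0+1)/m}$, independently of $a$, which finishes the proof.

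The step I expect to be the main obstacle is the last one: identifying the exact root of unity $\eta(\omega^{(q_0+1)/2})$ and confirming its independence of the chosen order-$m$ character. This reduces to a short parity analysis of $(q_0+1)/m$ and of $m$, but it is precisely where the semi-primitivity hypothesis is genuinely used (and it also explains why the direct coset argument cannot simply be run over $\F_{p^f}$ when $t$ is even, forcing the preliminary Hasse--Davenport reduction). By comparison, that reduction and the coset collapse are routine, the only care needed being the surjectivity and divisibility bookkeeping required to realize $\chi_m$ as a lift.
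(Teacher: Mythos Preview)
The paper does not give a proof of Theorem~\ref{thm:semiprim}; it is quoted as a known result from \cite[Theorem~11.6.3]{BEW}. So there is no in-paper argument to compare against.

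That said, your argument is correct and essentially the standard one. The Hasse--Davenport reduction to $t=1$ is clean: since $m\mid p^{2s}-1$, any order-$m$ character of $\F_{p^{f}}$ is trivial on the kernel of $\Norm_{p^{f}/p^{2s}}$ and hence is a lift, so Theorem~\ref{thm:lift} applies. In the base case your coset computation is right: with $q_0=p^{s}$ one gets
\[
G_{q_0^2}(\eta)=q_0\sum_{\substack{\la y\ra\in\F_{q_0^2}^\ast/\F_{q_0}^\ast\\ \Tr_{q_0^2/q_0}(y)=0}}\eta(y),
\]
the full coset sum vanishing because $\eta$ descends to a nontrivial character of the cyclic quotient $\F_{q_0^2}^\ast/\F_{q_0}^\ast$. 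The unique trace-zero coset is $\F_{q_0}^\ast$ when $p=2$ and $\omega^{(q_0+1)/2}\F_{q_0}^\ast$ when $p$ is odd, and your case split on the parity of $m$ correctly shows $\zeta_m^{a(q_0+1)/2}=(-1)^{(q_0+1)/m}$ for every $a$ coprime to $m$. One minor remark: your proof nowhere uses that $s$ is minimal, and indeed the displayed formula holds for any $s$ with $p^{s}\equiv -1\pmod m$ and $f=2st$; minimality is only needed in the statement to parametrize all admissible $f$ uniquely.
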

We will need the  {\it Davenport-Hasse lifting formula}, which is stated below.  
\begin{theorem}\label{thm:lift}
{\em (\cite[Theorem~11.5.2]{BEW})} Let $\chi'$ be a nontrivial multiplicative character of
$\F_{p^f}$ and let $\chi$ be the lift of $\chi'$ to $\F_{p^{fs}}$, i.e.,
$\chi(\alpha)=\chi'(\Norm_{p^{fs}/p^f}(\alpha))$ for $\alpha\in \F_{p^{fs}}$, where $s\geq 2$ is an
integer. Then
\[
G_{p^{fs}}(\chi)=(-1)^{s-1}(G_{p^f}(\chi'))^s. 
\]
\end{theorem}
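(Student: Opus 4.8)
The plan is to prove the lifting relation by the classical generating-function (zeta-function) method, writing $q=p^f$ so that the extension is $\F_{q^s}=\F_{p^{fs}}$ and the lift is $\chi=\chi'\circ\Norm_{q^s/q}$. The idea is to package the additive (trace) and multiplicative (norm) data into a single function on monic polynomials over $\F_q$. For a monic $g(x)=x^d-c_1x^{d-1}+\cdots+(-1)^dc_d$ of degree $d\ge 1$, I would set $\lambda(g):=\psi(c_1)\,\chi'(c_d)$, where $\psi$ is the canonical additive character of $\F_q$ and $\chi'(0):=0$, and put $\lambda(1):=1$. Because $c_1$ and $c_d$ are the sum and the product of the roots of $g$, traces add and norms multiply under multiplication of polynomials, so (with the convention $\chi'(0)=0$) the function $\lambda$ is completely multiplicative: $\lambda(g_1g_2)=\lambda(g_1)\lambda(g_2)$ for all monic $g_1,g_2$.

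First I would evaluate the formal generating function $\sum_{g}\lambda(g)T^{\deg g}$ in two ways. Summing over a fixed degree $d$, one uses that the $d$ coefficients $c_1,\dots,c_d$ range freely and independently over $\F_q$. In degree $1$ the single coefficient plays the role of both $c_1$ and $c_d$, giving $\sum_{a\in\F_q}\psi(a)\chi'(a)=G_q(\chi')$; in degree $d\ge 2$ the coefficients $c_1$ and $c_d$ are distinct and free, so the factor $\sum_{c_1\in\F_q}\psi(c_1)=0$ (orthogonality, since $\psi$ is nontrivial) forces the entire sum to vanish. Hence $\sum_g\lambda(g)T^{\deg g}=1+G_q(\chi')T$. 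On the other hand, unique factorization into monic irreducibles $P$, together with complete multiplicativity, yields the Euler product $\prod_P(1-\lambda(P)T^{\deg P})^{-1}$. Equating the two gives the key identity $\prod_P(1-\lambda(P)T^{\deg P})^{-1}=1+G_q(\chi')T$.

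The final step is to extract $G_{q^s}(\chi)$ by taking logarithms and comparing coefficients of $T^s$. Expanding $-\log(1-\lambda(P)T^{\deg P})=\sum_{m\ge 1}\lambda(P)^mT^{m\deg P}/m$ and $\log(1+G_q(\chi')T)=\sum_{n\ge 1}(-1)^{n-1}G_q(\chi')^nT^n/n$, the coefficient of $T^n$ on the right is $(-1)^{n-1}G_q(\chi')^n/n$, while on the left it is $\tfrac1n\sum_{e\mid n}e\sum_{\deg P=e}\lambda(P)^{n/e}$. Here I would use that a monic irreducible $P$ of degree $e$ is the minimal polynomial of its $e$ conjugate roots $\alpha\in\F_{q^e}$, each of exact degree $e$, with $c_1=\Tr_{q^e/q}(\alpha)$ and $c_e=\Norm_{q^e/q}(\alpha)$; the tower formulas $\Tr_{q^n/q}(\alpha)=(n/e)\Tr_{q^e/q}(\alpha)$ and $\Norm_{q^n/q}(\alpha)=\Norm_{q^e/q}(\alpha)^{n/e}$ then give $\lambda(P)^{n/e}=\psi(\Tr_{q^n/q}(\alpha))\,\chi'(\Norm_{q^n/q}(\alpha))$. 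Summing over all $e\mid n$ and all such roots reconstitutes $\sum_{\alpha\in\F_{q^n}^\ast}\psi(\Tr_{q^n/q}(\alpha))\,\chi'(\Norm_{q^n/q}(\alpha))=G_{q^n}(\chi)$, since $\psi\circ\Tr_{q^n/q}$ is the canonical additive character of $\F_{q^n}$ and $\chi'\circ\Norm_{q^n/q}$ is exactly the lift $\chi$. Thus the $T^n$ coefficient on the left equals $G_{q^n}(\chi)/n$, and matching coefficients at $n=s$ gives $G_{q^s}(\chi)=(-1)^{s-1}G_q(\chi')^s$, as claimed.

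I expect the main obstacle to be the bookkeeping in this last step rather than any deep difficulty: one must correctly account for the factor $e$ arising from the $e$ conjugate roots of each degree-$e$ irreducible, verify the trace/norm tower compatibilities so that $\lambda(P)^{n/e}$ matches the extension-field summand, and confirm that ranging over divisors $e\mid n$ exhausts $\F_{q^n}^\ast$ exactly once. Once this dictionary between the logarithmic expansion and the extension Gauss sum is established, the relation falls out from the comparison of a single Taylor coefficient, and specializing $n=s$ completes the proof.
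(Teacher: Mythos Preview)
Your argument is the classical $L$-function proof of the Davenport--Hasse lifting relation, and it is correct. The bookkeeping you flag as the main obstacle is handled properly: the factor $e$ in $\tfrac{1}{n}\sum_{e\mid n}e\sum_{\deg P=e}\lambda(P)^{n/e}$ exactly accounts for the $e$ conjugate roots of each degree-$e$ irreducible, and the tower identities $\Tr_{q^n/q}(\alpha)=(n/e)\Tr_{q^e/q}(\alpha)$, $\Norm_{q^n/q}(\alpha)=\Norm_{q^e/q}(\alpha)^{n/e}$ convert $\lambda(P)^{n/e}$ into the summand of $G_{q^n}(\chi)$, so comparing the $T^s$-coefficients of the two logarithmic expansions gives the stated formula.

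As for comparison with the paper: the paper does not supply a proof of this theorem at all. Theorem~\ref{thm:lift} is quoted verbatim from \cite[Theorem~11.5.2]{BEW} and used as a black box in the evaluation of $P_1$ (Lemma~\ref{lem:res1}) and in Theorem~\ref{Gaussmain}. So there is no ``paper's own proof'' to match against; your write-up is essentially the standard argument one finds in \cite{BEW} or in Weil's original treatment, and it would serve perfectly well as the missing justification.
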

The following theorem is often referred to as the  {\it Davenport-Hasse product formula}.  
\begin{theorem}
\label{thm:Stickel2}{\em (\cite[Theorem~11.3.5]{BEW})} Let $\eta$ be a multiplicative character of
order $\ell>1$ of $\F_{p^f}$. For every nontrivial multiplicative character $\chi$ of $\F_{p^f}$,
\[
G_{p^f}(\chi)=\frac{G_{p^f}(\chi^\ell)}{\chi^\ell(\ell)}
\prod_{i=1}^{\ell-1}
\frac{G_{p^f}(\eta^i)}{G_{p^f}(\chi\eta^i)}. 
\]
\end{theorem}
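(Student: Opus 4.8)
The statement is the finite-field analogue of the Gauss multiplication formula for the Gamma function, with the Gauss sum $G_q(\chi)$ (here $q=p^f$) playing the role of $\Gamma$ and the characters $\chi\eta^i$ in arithmetic progression playing the role of the arguments $z+i/\ell$. Since $\ell\mid q-1$ we have $\gcd(\ell,p)=1$, so $\ell$ is a nonzero element of $\F_q$ and $\chi^\ell(\ell)$ is a well-defined nonzero scalar. The first step is purely formal: clearing the denominators $G_q(\chi\eta^i)$ in the stated identity and absorbing the $i=0$ factor $G_q(\chi)$ shows that the formula is equivalent to the product form
\[
\prod_{i=0}^{\ell-1}G_q(\chi\eta^i)=\chi^{-\ell}(\ell)\,G_q(\chi^\ell)\prod_{i=1}^{\ell-1}G_q(\eta^i).
\]
I would first establish this under the genericity assumption that $\chi\eta^i$ is nontrivial for every $i$ (equivalently $\chi\notin\langle\eta\rangle$, which also guarantees $\chi^\ell\neq\varepsilon$), and treat the degenerate characters $\chi\in\langle\eta\rangle$ at the very end.

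Second, expand the left-hand product as an $\ell$-fold character sum
\[
\prod_{i=0}^{\ell-1}G_q(\chi\eta^i)=\sum_{x_0,\dots,x_{\ell-1}\in\F_q^\ast}\Big(\prod_{i=0}^{\ell-1}(\chi\eta^i)(x_i)\Big)\psi\Big(\sum_{i=0}^{\ell-1}x_i\Big),
\]
and perform a radial--angular substitution $x_i=x_0w_i$ with $w_0=1$ and $x_0,w_1,\dots,w_{\ell-1}\in\F_q^\ast$. Collecting powers of $x_0$ produces the factor $\chi^\ell(x_0)\eta^{\binom{\ell}{2}}(x_0)$, and, writing $\sigma=1+\sum_{i\ge1}w_i$ so that $\psi(\sum_i x_i)=\psi(x_0\sigma)$, the sum over $x_0$ collapses to the single Gauss sum $\overline{\chi^\ell\eta^{\binom{\ell}{2}}}(\sigma)\,G_q(\chi^\ell\eta^{\binom{\ell}{2}})$ whenever $\sigma\neq0$ (the locus $\sigma=0$ contributes $0$ in the generic case). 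This factors the whole expression as $G_q(\chi^\ell\eta^{\binom{\ell}{2}})\cdot T$, where
\[
T=\sum_{\substack{w_1,\dots,w_{\ell-1}\in\F_q^\ast\\ \sigma\ne0}}\Big(\prod_{i=1}^{\ell-1}(\chi\eta^i)(w_i)\Big)\,\overline{\chi^\ell\eta^{\binom{\ell}{2}}}(\sigma)
\]
is an $(\ell-1)$-dimensional Jacobi-type sum governed by the affine constraint $\sigma=1+\sum w_i$.

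Third --- and this is the crux --- I must evaluate $T$ and show it equals $\chi^{-\ell}(\ell)\prod_{i=1}^{\ell-1}G_q(\eta^i)$ up to the bookkeeping factor $G_q(\chi^\ell)/G_q(\chi^\ell\eta^{\binom{\ell}{2}})$. The plan is to integrate out $w_1,\dots,w_{\ell-1}$ one at a time, each elimination converting a pair of adjacent characters into a Jacobi sum via $J(\lambda,\mu)=G_q(\lambda)G_q(\mu)/G_q(\lambda\mu)$ (with $J(\lambda,\mu)=\sum_a\lambda(a)\mu(1-a)$) and telescoping the surviving Gauss factors down to $\prod_{i=1}^{\ell-1}G_q(\eta^i)$. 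The essential and most delicate point is the emergence of the constant $\chi^{-\ell}(\ell)$: this does \emph{not} follow from formally re-expressing Jacobi sums as ratios of Gauss sums --- doing so merely collapses the identity back to the tautology $G_q(\chi)\overline{G_q(\chi)}=q$ and loses the constant --- but instead comes from an explicit change of variables on the simplex $\{\sigma=1\}$, the finite-field analogue of completing the $\ell$-th power. For $\ell=2$ this substitution is literally completing the square and yields the classical factor $\chi^{-2}(2)=\overline{\chi}(4)$; for general $\ell$ I expect the constant to arise from the Jacobian-like scaling of the diagonal $w_1=\cdots=w_{\ell-1}$ coupled with the $\ell$-th power map, and pinning this down rigorously is the main obstacle.

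Finally, two reconciliations complete the argument. The parity of $\ell$ controls the spurious factor $\eta^{\binom{\ell}{2}}$: for odd $\ell$ one has $\binom{\ell}{2}\equiv0\pmod\ell$, so $\eta^{\binom{\ell}{2}}=\varepsilon$ and $G_q(\chi^\ell\eta^{\binom{\ell}{2}})=G_q(\chi^\ell)$ outright; for even $\ell$ one has $\eta^{\binom{\ell}{2}}=\eta^{\ell/2}$, the quadratic character, and the discrepancy between $G_q(\chi^\ell)$ and $G_q(\chi^\ell\eta^{\ell/2})$ must be absorbed by one further Gauss--Jacobi relation inside the evaluation of $T$. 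The degenerate cases $\chi\in\langle\eta\rangle$ are then immediate: writing $\chi=\eta^j$ with $1\le j\le \ell-1$, both sides reduce directly to $-\prod_{i=1}^{\ell-1}G_q(\eta^i)$ upon using $G_q(\varepsilon)=-1$ and $\chi^{-\ell}(\ell)=1$. Assembling the generic identity with these boundary cases yields the product form, and hence, after the formal rearrangement of the first step, the stated multiplication formula.
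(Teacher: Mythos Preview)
The paper does not supply its own proof of this statement: the Davenport--Hasse product formula is quoted from \cite[Theorem~11.3.5]{BEW} and invoked as a black box in the proofs of Theorem~\ref{Gaussmain} and Corollary~\ref{cor:Gauss}. There is therefore nothing in the paper to compare your attempt against.

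On the proposal itself: the reduction to the product form and the radial substitution $x_i=x_0w_i$ are sound opening moves, and you correctly isolate the constant $\chi^{-\ell}(\ell)$ as the heart of the matter. But the proposal is, by your own admission, incomplete: you write that ``pinning this down rigorously is the main obstacle'' and offer only a heuristic (``Jacobian-like scaling of the diagonal $w_1=\cdots=w_{\ell-1}$ coupled with the $\ell$-th power map'') for the appearance of the constant. The one-variable-at-a-time elimination of the $w_i$ via $J(\lambda,\mu)=G(\lambda)G(\mu)/G(\lambda\mu)$ does telescope the Gauss factors, yet as you note it cannot by itself manufacture $\chi^{-\ell}(\ell)$, and the parity bookkeeping with $\eta^{\binom{\ell}{2}}$ does not close that gap either. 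Until the mechanism producing $\chi^{-\ell}(\ell)$ is made explicit---for instance via a direct substitution tied to the $\ell$-th power map on $\F_q^\ast$, or via Stickelberger's congruence on the $p$-adic side as in the cited reference---what you have is an outline rather than a proof.
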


The following is the main theorem of this section. 
\begin{theorem}\label{Gaussmain}
Let $q=p^f$ be an odd prime power such that $q\equiv 3\pmod{4}$, and let $m$ be an odd positive
integer dividing $N=q^2+q+1$. Let $\chi_m'$ be a multiplicative character of order $m$ of $\F_{q^3}$
and $\chi_m$ be its lift to $\F_{q^6}$, and $\chi_4$ be a multiplicative character of order $4$ of
$\F_{q^6}$. Then, it holds that $G_{q^6}(\chi_4\chi_{m})=G_{q^6}(\chi_4^3\chi_{m})$. In particular,
it holds that
\begin{equation}\label{eq:Gaussmain}
G_{q^6}(\chi_4\chi_{m})=\rho_q G_{q^3}({\chi'}_{m}^{4})G_{q^3}({\chi'}_{m}^{-2}), 
\end{equation}
where $\rho_q=-1$ or $1$ depending on whether $q\equiv 3\pmod{8}$ or 
 $q\equiv 7\pmod{8}$. 
\end{theorem}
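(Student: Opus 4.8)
The plan is to combine the Davenport–Hasse lifting and product formulas with the semi-primitivity evaluation of the quartic Gauss sum. First I would deal with the claim $G_{q^6}(\chi_4\chi_m)=G_{q^6}(\chi_4^3\chi_m)$. Since $q\equiv 3\pmod 4$, we have $q\equiv 3\pmod 4$ and $4\mid q^2-1$ but $4\nmid q-1$, so a character of order $4$ of $\F_{q^6}$ is a lift from $\F_{q^2}$, and $-1$ is a non-square in $\F_q$; I would use this to compute $\chi_4(-1)$. Because $\chi_m$ is a lift from $\F_{q^3}$ and $m$ is odd, $\chi_m(-1)=\chi_m'((-1)^{q^2+q+1})=\chi_m'(-1)=1$ (as $m$ odd), so $\chi_4\chi_m$ and its conjugate $\chi_4^{-1}\chi_m^{-1}=\chi_4^3\chi_m^{-1}$ differ only through the $\chi_m$-part; applying $G_q(\bar\chi)=\chi(-1)\overline{G_q(\chi)}$ twice (once with $\chi_4\chi_m$, once with $\chi_4^3\chi_m$) and tracking the signs $\chi_4\chi_m(-1)$ versus $\chi_4^3\chi_m(-1)$, which are equal since $\chi_4^2(-1)=1$, I would get $|G_{q^6}(\chi_4\chi_m)|=|G_{q^6}(\chi_4^3\chi_m)|$ and then argue equality of the sums themselves by noting both are Galois conjugate over an appropriate cyclotomic field and pinning down the argument. (In fact the cleanest route may be to bypass this and derive the equality as a \emph{corollary} of \eqref{eq:Gaussmain}, since the right-hand side of \eqref{eq:Gaussmain} is manifestly symmetric under $\chi_4\leftrightarrow\chi_4^3$.)

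The main computation is \eqref{eq:Gaussmain}. I would start from the Davenport–Hasse product formula (Theorem~\ref{thm:Stickel2}) applied over $\F_{q^6}$ with $\eta=\chi_4$, $\ell=4$, and $\chi=\chi_4\chi_m$: this expresses $G_{q^6}(\chi_4\chi_m)$ in terms of $G_{q^6}((\chi_4\chi_m)^4)=G_{q^6}(\chi_m^4)$ and the Gauss sums $G_{q^6}(\chi_4^i)$, $G_{q^6}(\chi_4^{i+1}\chi_m)$ for $i=1,2,3$. Several of these simplify: $G_{q^6}(\chi_4^2)$ is the quadratic Gauss sum over $\F_{q^6}$, which equals $q^3$ (or $-q^3$) by the standard evaluation since $q^6\equiv 1\pmod 4$; and $G_{q^6}(\chi_4^i)$ for $i=1,3$ can be evaluated by Theorem~\ref{thm:semiprim} because, with $q\equiv 3\pmod 4$, $p$ is semi-primitive modulo $4$ (indeed $p\equiv 3\pmod 4$ gives $s=1$ and $f=6$ in the notation there, so $t=3$). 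Simultaneously, since $\chi_m$ and $\chi_m^4$ and $\chi_m^{-2}$ are all lifts from $\F_{q^3}$ (using $m$ odd so that $4$ and $-2$ are invertible mod $m$ and powers of $\chi_m$ stay of $m$-power order), the lifting formula (Theorem~\ref{thm:lift}) with $s=2$ converts $G_{q^6}(\chi_m^4)$, etc., into $(G_{q^3}(\chi_m'^4))^2$, etc. After substituting all of this and carefully collecting the magnitude factors — every Gauss sum over $\F_{q^6}$ of a nontrivial character has absolute value $q^3$, so the product of norms must come out right — what remains is to show the accumulated sign/root-of-unity is exactly $\rho_q$, which forces tracking $\chi_4(4)=\chi_4(2)^2$, $\chi_m'(4)$, and the semi-primitive signs, and this is where the split into $q\equiv 3$ versus $7\pmod 8$ enters (it governs $\chi_4(2)$, equivalently whether $2$ is a fourth power, via the residue of $q\bmod 8$).

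The hard part will be the bookkeeping of signs and small-argument factors: isolating the exact power of $\sqrt{-1}$ produced by the $G_{q^6}(\chi_4^i)$ terms, the factor $\chi_4^4(4)=1$ versus the genuinely present $\chi_m'^4(4)$-type denominators coming out of the product formula, and reconciling the two occurrences of $\chi_4(-1)$. I expect the cleanest bookkeeping device is to first prove the formula up to sign by a pure absolute-value/Galois argument (both sides lie in $\Q(\zeta_{4m})$ and are algebraic integers of the same absolute value dividing $q^3$ appropriately), and only then determine the sign $\rho_q$ by specializing — for instance by reducing modulo a prime above $p$, or by a direct check for one representative $q$ in each of the classes $3,7\pmod 8$ and invoking that $\rho_q$ depends only on $q\bmod 8$ through the explicit formulas used. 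I would also double-check the edge case $m=1$ (where $\chi_m$ is trivial and Theorem~\ref{thm:Yama}-type adjustments, or the trivial-character clause $G_q(\text{triv})=-1$, may change a constant) and confirm it is consistent with the stated $\rho_q$.
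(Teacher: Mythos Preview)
Your overall strategy (Davenport--Hasse product formula with $\ell=4$, then lifting to $\F_{q^3}$, then semi-primitive evaluation) matches the paper's, but there are two genuine gaps.

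First, the equality $G_{q^6}(\chi_4\chi_m)=G_{q^6}(\chi_4^3\chi_m)$ is not a corollary you can postpone: it is \emph{needed inside} the product-formula computation. When you expand via Theorem~\ref{thm:Stickel2} with $\chi=\chi_4\chi_m$ and $\eta=\chi_4$, the factor $G_{q^6}(\chi_4^3\chi_m)$ appears in the denominator, and since $\chi_4^3$ has order $4\nmid q^3-1$ this is \emph{not} a lift from $\F_{q^3}$, so lifting does not evaluate it. The paper's device is the one-line Frobenius identity $G_{q^6}(\chi)=G_{q^6}(\chi^{q^3})$: because $q^3\equiv 3\pmod 4$ and $q^3\equiv 1\pmod m$ (as $m\mid N\mid q^3-1$), this gives $G_{q^6}(\chi_4\chi_m)=G_{q^6}(\chi_4^{q^3}\chi_m^{q^3})=G_{q^6}(\chi_4^3\chi_m)$ immediately. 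Moving that factor to the left turns the product-formula identity into a \emph{quadratic} relation; after a second application with $\ell=2$ to handle $G_{q^6}(\chi_4^2\chi_m)$, one obtains $G_{q^6}(\chi_4\chi_m)^2=G_{q^6}(\chi_m^4)G_{q^6}(\chi_m^{-2})=G_{q^3}({\chi'}_m^{4})^2G_{q^3}({\chi'}_m^{-2})^2$, i.e.\ the formula up to a sign $\rho\in\{\pm1\}$. Your route via $G(\bar\chi)=\chi(-1)\overline{G(\chi)}$ and ``Galois conjugacy'' only shows both sums have absolute value $q^3$, which is automatic; and deriving the equality \emph{from} \eqref{eq:Gaussmain} is circular for exactly this reason.

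Second, your plan for the sign is not sound. The computation above gives only $\rho^2=1$, and a~priori $\rho$ could depend on $m$, not just on $q\bmod 8$; so ``check one representative $q$ in each residue class'' does not determine $\rho$ for all $m$, and ``reduce modulo a prime above $p$'' is at best a gesture toward Stickelberger without a mechanism. The paper pins $\rho$ down by induction on the number of prime factors of $m$, reducing modulo a prime divisor of $m$ (not of $p$). For $m=p_1$ prime one computes $G_{q^6}(\chi_4\chi_{p_1})^{p_1}\equiv G_{q^6}(\chi_4^{p_1})=G_{q^6}(\chi_4)=\rho_q q^3\equiv\rho_q\pmod{p_1}$ (using $p_1\mid q^3-1$), while from the right-hand side the $G_{q^3}$-factors collapse to $1$ modulo $p_1$, leaving $\rho$; hence $\rho=\rho_q$. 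The inductive step peels off one prime factor $p_2$ of $m$ by the same ``raise to the $p_2$-th power and reduce mod $p_2$'' trick. This congruence argument is the missing idea.
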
 
\proof
First, we have
\[
G_{q^6}(\chi_4\chi_{m})=G_{q^6}(\chi_4^{q^3}\chi_{m}^{q^3})=
G_{q^6}(\chi_4^3\chi_{m}). 
\]

Applying the Davenport-Hasse product formula (Theorem~\ref{thm:Stickel2}) with 
$\ell=4$, 
$\chi=\chi_{4}\chi_m$, and $\eta=\chi_4$, we have 
\begin{align}
G_{q^6}(\chi_{4}\chi_m)&=
\frac{G_{q^6}(\chi_m^{4})G_{q^6}(\chi_4)G_{q^6}(\chi_4^2)G_{q^6}(\chi_4^3)}{\chi_m^{4}(4)G_{q^6}(\chi_4^2\chi_m)G_{q^6}(\chi_4^3\chi_m)G_{q^6}(\chi_m)}\nonumber\\
&=
q^6\frac{G_{q^6}(\chi_m^{4})G_{q^6}(\chi_4^2)}{G_{q^6}(\chi_4^2\chi_m)G_{q^6}(\chi_4\chi_m)G_{q^6}(\chi_m)}. \label{inse}
\end{align}
On the other hand, we have 
\begin{equation}\label{outs}
G_{q^6}(\chi_4^2\chi_m)=
\frac{G_{q^6}(\chi_m^{2})G_{q^6}(\chi_4^2)}{\chi_m^{2}(2)G_{q^6}(\chi_m)}=\frac{G_{q^6}(\chi_m^{2})G_{q^6}(\chi_4^2)}{
G_{q^6}(\chi_m)}. 
\end{equation}
By substituting \eqref{outs} into \eqref{inse}, we have
\begin{equation}\label{eq:Gauss2}
G_{q^6}(\chi_{4}\chi_m)^2=q^6\frac{G_{q^6}(\chi_m^{4})}{G_{q^6}(\chi_m^{2})}=
G_{q^6}(\chi_m^{4})G_{q^6}(\chi_m^{-2}). 
\end{equation}
Then, by the Davenport-Hasse lifting formula (Theorem~\ref{thm:lift}), Eq.~(\ref{eq:Gauss2}) is reformulated as 
\[
G_{q^6}(\chi_{4}\chi_m)^2=G_{q^3}({\chi'}_m^{4})^2G_{q^3}({\chi'}_m^{-2})^2,
\]
i.e., 
\[
G_{q^6}(\chi_{4}\chi_m)=\rho G_{q^3}({\chi'}_m^{4})G_{q^3}({\chi'}_m^{-2})
\]
for some $\rho\in \{-1,1\}$. 

Now, we determine the sign of $\rho$ by induction. Write $m=\ell p_1$, where $\ell$ is a positive
integer and $p_1$ is an odd prime. First we consider the case where $\ell=1$, i.e., $m=p_1$. Take
the reduction of $G_{q^6}(\chi_{4}\chi_{p_1})^{p_1}$ modulo $p_1$:
\begin{align}
G_{q^6}(\chi_{4}\chi_{p_1})^{p_1}&\equiv \sum_{z\in \F_{q^{6}}^\ast}\chi_{4}^{p_1}\chi_{p_1}^{p_1 }(z)\psi(p_1z)\pmod{p_1}\nonumber\\
&=\sum_{z\in \F_{q^{6}}^\ast}\chi_{4}^{-p_1}(p_1)\chi_{4}^{p_1}(p_1z)\psi(p_1z)\nonumber\\
&=G_{q^6}(\chi_{4}^{p_1})=G_{q^6}(\chi_{4}). \label{eq:Gauss4}
\end{align}
By Theorem~\ref{thm:semiprim}, we have $G_{q^6}(\chi_4)=\rho_q q^3$, where $\rho_q=-1$ or $1$
depending on whether $q\equiv 3\pmod{8}$ or $q\equiv 7\pmod{8}$. On the other hand,
\begin{align}
G_{q^6}(\chi_{4}\chi_{p_1})^{p_1}&=\left(\rho  G_{q^3}({\chi'}_{p_1}^{4})G_{q^3}({\chi'}_{p_1}^{-2})\right)^{p_1}\nonumber\\
&= \rho^{p_1} G_{q^3}({\chi'}_{p_1}^{4})^{p_1} G_{q^3}({\chi'}_{p_1}^{-2})^{p_1}
\equiv  
\rho\pmod{p_1}.\label{eq:Gauss5}
\end{align}
Hence, by  Eqs.~(\ref{eq:Gauss4}) and (\ref{eq:Gauss5}), we have 
$\rho\equiv \rho_q q^3\pmod{p_1}$. Since 
$p_1\,|\,(q^3-1)$, we obtain $\rho=\rho_q$.

We next consider the case where $\ell>1$. Write $\ell =p_2 \ell'$ with 
$p_2$ a prime, and 
let  $\chi_{\ell' p_1}:=\chi_{m}^{p_2}$.  Assume that 
\[
G_{q^6}(\chi_{4}\chi_{\ell' p_1})=\rho_q G_{q^3}({\chi'}_{\ell' p_1}^{4})G_{q^3}({\chi'}_{\ell' p_1}^{-2}). 
\]
Then, we have
\[
G_{q^6}(\chi_{4}\chi_m)^{p_2}\equiv 
\sum_{z\in \F_{q^{6}}^\ast}\chi_{4}^{p_2}\chi_m^{p_2 }(z)\psi(p_2 z)\,  \pmod{p_2}=G_{q^6}(\chi_{4}\chi_{\ell' p_1}). 
\]
On the other hand, 
\[
G_{q^6}(\chi_{4}\chi_m)^{p_2}=  
\rho G_{q^3}({\chi'}_m^{4})^{p_2}G_{q^3}({\chi'}_m^{-2})^{p_2}
\equiv  \rho G_{q^3}({\chi'}_{\ell' p_1}^{4})G_{q^3}({\chi'}_{\ell' p_1}^{-2})\pmod{p_2}.
\]
This implies that $\rho=\rho_q$. 
This completes the proof of the theorem. 
\qed

\begin{corollary}\label{cor:Gauss}
With the same notation as in Theorem~\ref{Gaussmain}, it holds that
\[
G_{q^6}(\chi_4\chi_m)=\rho_q \frac{q^3 G_{q^3}(\chi_2' {\chi'}_m^{2}) }{G_{q^3}(\chi_2')}, 
\]
where $\chi_2'$ is the quadratic character of $\F_{q^3}$. 
\end{corollary}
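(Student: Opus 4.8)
The plan is to obtain the corollary as a purely formal consequence of Theorem~\ref{Gaussmain}, using only the elementary Gauss-sum identities recalled in Section~\ref{section:prelim} and the Davenport-Hasse product formula; no new character-sum estimate is needed. Since Theorem~\ref{Gaussmain} already gives $G_{q^6}(\chi_4\chi_m)=\rho_q\,G_{q^3}({\chi'}_m^{4})\,G_{q^3}({\chi'}_m^{-2})$, it suffices to prove
\[
G_{q^3}({\chi'}_m^{4})\,G_{q^3}({\chi'}_m^{-2})=\frac{q^3\,G_{q^3}(\chi_2'\,{\chi'}_m^{2})}{G_{q^3}(\chi_2')}.
\]

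First I would eliminate the inverse character. Combining properties (i) and (ii) of Gauss sums yields $G_{q^3}(\psi)\,G_{q^3}(\psi^{-1})=\psi(-1)\,q^3$ for every nontrivial multiplicative character $\psi$ of $\F_{q^3}$; applying this with $\psi={\chi'}_m^{2}$ and noting that ${\chi'}_m^{2}(-1)={\chi'}_m(-1)^{2}=1$ gives $G_{q^3}({\chi'}_m^{-2})=q^3/G_{q^3}({\chi'}_m^{2})$. Hence the identity to be proved is equivalent to
\[
\frac{G_{q^3}({\chi'}_m^{4})}{G_{q^3}({\chi'}_m^{2})}=\frac{G_{q^3}(\chi_2'\,{\chi'}_m^{2})}{G_{q^3}(\chi_2')}.
\]

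The decisive step is the Davenport-Hasse product formula (Theorem~\ref{thm:Stickel2}), applied over $\F_{q^3}$ with $\ell=2$, $\eta=\chi_2'$ the quadratic character, and $\chi={\chi'}_m^{2}$ (nontrivial since $m>1$). It gives
\[
G_{q^3}({\chi'}_m^{2})=\frac{G_{q^3}({\chi'}_m^{4})}{{\chi'}_m^{4}(2)}\cdot\frac{G_{q^3}(\chi_2')}{G_{q^3}(\chi_2'\,{\chi'}_m^{2})},
\]
which rearranges to the displayed equivalent identity as soon as the cocycle factor ${\chi'}_m^{4}(2)$ equals $1$. I expect this to be the only point needing an argument: since $p$ is odd we have $2\in\F_q^\ast$, and $\F_q^\ast$ is exactly the group of $N$-th powers of $\F_{q^3}^\ast$ (because $N=(q^3-1)/(q-1)$); as $m\mid N$, every element of $\F_q^\ast$ is an $m$-th power in $\F_{q^3}^\ast$, so ${\chi'}_m$ is trivial on $\F_q^\ast$ and in particular ${\chi'}_m^{4}(2)=1$. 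Assembling the three displays then proves the corollary.
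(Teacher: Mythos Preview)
Your proof is correct and follows essentially the same route as the paper: both apply the Davenport--Hasse product formula over $\F_{q^3}$ with $\ell=2$ and $\eta=\chi_2'$ (the paper takes $\chi=\chi_2'{\chi'}_m^{2}$, you take $\chi={\chi'}_m^{2}$, but the resulting identity is the same after rearrangement), together with $G_{q^3}({\chi'}_m^{2})G_{q^3}({\chi'}_m^{-2})=q^3$ and the observation that ${\chi'}_m$ is trivial on $\F_q^\ast$ so that ${\chi'}_m^{4}(2)=1$. Your argument is slightly more explicit about this last point than the paper's one-line derivation; note also that the trivial case $m=1$ (where ${\chi'}_m^{2}$ is not eligible for the product formula) is immediate from $G_{q^6}(\chi_4)=\rho_q q^3$.
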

\proof 
Applying the Davenport-Hasse product formula with $\ell=2$, $\chi=\chi'_2{\chi'}_{m}^{2}$ and $\eta=\chi_2'$, we have 
\[
G_{q^3}(\chi_2'{\chi'}_m^{2})=\frac{G_{q^3}(\chi_2')}{q^3}G_{q^3}({\chi'}_m^{4})G_{q^3}({\chi'}_m^{-2}). 
\]
Then, Eq.~\eqref{eq:Gaussmain} of Theorem~\ref{Gaussmain} is reformulated as
\[
G_{q^6}(\chi_4\chi_m)=\rho_q \frac{q^3 G_{q^3}(\chi_2' {\chi'}_m^{2}) }{G_{q^3}(\chi_2')}.
\] 
This completes the proof. 
\qed

\section{The beginnings of a construction: a partition of a conic in $\PG(2,q)$}\label{section:conic}

Let $\omega$ be a primitive element of $\F_{q^3}$ and $N:=q^2+q+1$. Viewing $\F_{q^3}$ as a
3-dimensional vector space over $\F_q$, we will use $\F_{q^3}$ as the underlying vector space of
$\PG(2,q)$. The points of $\PG(2,q)$ are $\langle \omega^i\rangle$, $0\le i\le N-1$, and the lines
of $\PG(2,q)$ are
\begin{equation}\label{eqn_Lu}
L_c:=\{\langle x\rangle\colon \Tr_{q^3/q}(\omega^c x)=0\},
\end{equation}
where $0\le c\le N-1$. Of course, $\la \omega^i\ra=\la \omega^{i+jN}\ra$ and $L_{c}=L_{c+jN}$, for
any $i,j$ and $c$.

Define a quadratic form $f: \F_{q^3}\rightarrow \F_q$ by $f(x):= \tr_{q^3/q}(x^2)$. The associated
bilinear form $B: \F_{q^3}\times \F_{q^3}\rightarrow \F_{q}$ is given by $B(x,y)=2\tr_{q^3/q}(xy)$.
It is clear that $B$ is nondegenerate. Therefore $f$ defines a conic $\cQ$ in $\PG(2,q)$, which
contains $q+1$ points. Consequently each line $\l$ of $\PG(2,q)$ meets $\cQ$ in $0$, $1$ or $2$
points, and $\l$ is called an {\it exterior}, {\it tangent} or {\it secant line} accordingly. Also
it is known that each point $P\in \PG(2,q) \setminus \cQ$ is on either $0$ or $2$ tangent lines to
$\cQ$, and $P$ is called an {\it interior} or {\it exterior point} accordingly.

Consider the following subset of $\Z_N$: 
\begin{equation}\label{eqn_IQ}
I_\cQ:=\{i\colon 0\le i\le N-1,\,\tr_{q^3/q}(\omega^{2i})=0\}=\{d_0,d_1,\ldots, d_{q}\},
\end{equation}
where the elements are numbered in any (unspecified) order. That is, $\cQ=\{\langle
\omega^{d_i}\rangle\colon 0\le i\le q\}$. Furthermore, consider the following subset (a so-called
{\it Singer difference set}) of $\Z_N$:
\begin{equation}\label{eqn_Sin}
S:=\{ i \pmod*{N} \colon \Tr_{q^3/q}(\omega^i)=0\}. 	
\end{equation}
That is, $L_0=\{\langle \omega^{i}\rangle\colon i \in S\}$. Then, it is clear that $I_\cQ\equiv
2^{-1}S\pmod{N}$.

For $x\in \F_{q^3}$, we define the sign of $x$, $\sgn(x)\in \{0,1,-1\}$, by 
\begin{equation}\label{eqn:sgn}
\sgn(x)
=\begin{cases} 1,\quad &\textup{if $x$ is a nonzero square},\\
-1,\quad &\textup{if $x$ is a nonsquare},\\
0,\quad & \textup{if $x=0$.}\end{cases}
\end{equation}
\begin{lemma}\label{lem:conicpro}
With the above notation, we have the following. 
\begin{enumerate}
\item[(1)] The polarity of $\PG(2,q)$ induced by $\cQ$ interchanges $\langle \omega^c\rangle $ and
  $L_c$. In particular, it maps points on $\cQ$ to tangent lines, and exterior (resp. interior)
  points to secant (resp. exterior) lines.
\item[(2)] For any point $P=\langle x\rangle$ off $\cQ$, $P$ is  exterior 
(resp. interior) if and only if $\sgn(f(x))=\epsilon$ (resp. $-\epsilon$), where
$\epsilon=1$ or $-1$ depending on whether $q\equiv 1\pmod{4}$ or $3\pmod{4}$. 
\end{enumerate}
\end{lemma}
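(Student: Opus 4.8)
The plan is to treat the two parts separately; part (1) is essentially bookkeeping. The polarity of $\PG(2,q)$ induced by $\cQ=\{\la x\ra : f(x)=0\}$ sends a point $\la x\ra$ to its polar line $\{\la y\ra : B(x,y)=0\}$, and since $B(x,y)=2\tr_{q^3/q}(xy)$ with $2$ invertible in $\F_q$, this polar line is $\{\la y\ra : \tr_{q^3/q}(xy)=0\}$, which for $x=\omega^c$ is exactly $L_c$ of \eqref{eqn_Lu}. For the classification of images I would use the fact that, for $P\notin\cQ$, the number of tangent lines of $\cQ$ through $P$ equals $|\cQ\cap P^\perp|$: a tangent at $R\in\cQ$ is the line $R^\perp$, and $P\in R^\perp\iff B(P,R)=0\iff R\in P^\perp$ by symmetry of $B$. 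One checks $|\cQ\cap P^\perp|\in\{0,2\}$ for $P\notin\cQ$ — it cannot be $1$, for a line meeting $\cQ$ in a unique point $R$ is the tangent $R^\perp$, forcing $P=R\in\cQ$; and it cannot exceed $2$ since the nondegenerate conic $\cQ$ contains no line. Hence $P^\perp$ is secant or exterior according as $P$ is exterior or interior, and if $P\in\cQ$ then $P\in P^\perp$ and $P^\perp\cap\cQ=\{P\}$, so $P^\perp$ is the tangent at $P$.

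For part (2), let $P=\la x\ra\notin\cQ$, so $f(x)\ne 0$ and $x$ is anisotropic for $f$. By part (1), $P$ is exterior (resp.\ interior) exactly when $L_c=\la x\ra^\perp$ is a secant (resp.\ exterior) line, and — since the points of $L_c$ lying on $\cQ$ are the projective isotropic points of the binary form $g:=f|_{x^\perp}$, where $x^\perp=\{y:B(x,y)=0\}$ — this happens exactly when $g$ is isotropic (resp.\ anisotropic); note $g$ is nondegenerate because $\F_{q^3}=\la x\ra\perp x^\perp$ is an orthogonal decomposition for $B$. In an adapted basis the Gram matrix of $f$ is block-diagonal with blocks $(f(x))$ and the Gram matrix of $g$, so $\operatorname{disc}(f)\equiv f(x)\operatorname{disc}(g)$ up to squares in $\F_q^\ast$. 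A nondegenerate binary form over $\F_q$ is isotropic iff minus its discriminant is a square; therefore $P$ is exterior iff $-\operatorname{disc}(g)$ is a square iff $-\operatorname{disc}(f)\,f(x)$ is a square.

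Everything then reduces to determining $\operatorname{disc}(f)$ up to squares, which I expect to be the only real point. Taking the basis $1,\omega,\omega^2$ of $\F_{q^3}/\F_q$ (a basis since $\omega$ is primitive, hence of degree $3$ over $\F_q$), the Gram matrix of $f$ is $\big(\tr_{q^3/q}(\omega^{i+j})\big)_{0\le i,j\le 2}$, whose determinant is the discriminant of the element $\omega$, namely $\prod_{i<j}(\omega^{q^i}-\omega^{q^j})^2$. Its square root $\Delta:=\prod_{i<j}(\omega^{q^i}-\omega^{q^j})$ lies in $\F_q$: the Frobenius generating $\operatorname{Gal}(\F_{q^3}/\F_q)$ cycles the conjugates $\omega,\omega^q,\omega^{q^2}$, acting on $\Delta$ as a $3$-cycle, an even permutation, so $\Delta$ is Frobenius-fixed; hence $\operatorname{disc}(f)=\Delta^2$ is a nonzero square in $\F_q^\ast$. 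Consequently $P$ is exterior iff $-f(x)$ is a square, i.e.\ iff $\sgn(f(x))=\sgn(-1)$; and $-1$ is a square in $\F_q$ precisely when $q\equiv 1\pmod{4}$, which identifies $\epsilon=\sgn(-1)$ and gives the stated dichotomy (with interior the remaining case $\sgn(f(x))=-\epsilon$).

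A small loose end to record: $\sgn(f(x))$ is unambiguous here even though $f(x)\in\F_q^\ast\subset\F_{q^3}^\ast$, because $N=q^2+q+1$ is odd, so an element of $\F_q^\ast$ is a square in $\F_{q^3}$ iff it is a square in $\F_q$. Thus the main obstacle is not the projective geometry but pinning down this one sign, and the key input is the elementary Galois-theoretic observation that a cyclic extension of odd degree has square discriminant.
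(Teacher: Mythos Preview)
Your argument is correct. The paper itself does not give a proof here: it simply refers to Hirschfeld's book for part~(1) and to \cite[Lemma~3.3]{Feng:2015aa} for part~(2) in the case $q\equiv 1\pmod 4$, remarking that $q\equiv 3\pmod 4$ is similar. Your write-up is therefore a self-contained replacement for those citations.

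For part~(1) your reasoning is the standard one. For part~(2) your route via the discriminant is clean and has the advantage of treating both residue classes of $q$ modulo $4$ simultaneously: once you observe that $\operatorname{disc}(f)=\Delta^2$ with $\Delta=\prod_{i<j}(\omega^{q^i}-\omega^{q^j})\in\F_q^\ast$ (because the Frobenius acts on the three conjugates as a $3$-cycle, an even permutation), the criterion ``$g=f|_{x^\perp}$ isotropic $\Leftrightarrow$ $-\operatorname{disc}(g)$ is a square'' gives exactly $\sgn(f(x))=\sgn(-1)=\epsilon$ for exterior points. The cited reference \cite{Feng:2015aa} argues more directly by counting secant and passant lines through a point via an explicit computation in the field model; both approaches are short, but yours isolates the one nontrivial input (the parity of the Galois group determining the square class of the trace-form discriminant) and makes transparent why the sign $\epsilon$ flips with $q\bmod 4$.
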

\proof For the proof of (1), we refer the reader to \cite{h1}. For the proof of (2) in the case
where $q\equiv 1\pmod{4}$, see \cite[Lemma 3.3]{Feng:2015aa}. The case $q\equiv 3\pmod{4}$ can be
proved in a similar way. \qed

\vspace{0.3cm}
By Lemma~\ref{lem:conicpro}, 
\begin{align*}
\mbox{ $L_c$ is tangent} & \mbox{ $\Leftrightarrow$ $\sgn(f(\omega^c))=0$\hspace{3mm} $\Leftrightarrow$ $|(S-c)\cap I_{\cQ}|=1$, }\\
\mbox{ $L_c$ is exterior} & \mbox{ $\Leftrightarrow$ $\sgn(f(\omega^c))=-\epsilon$ $\Leftrightarrow$ $|(S-c)\cap I_{\cQ}|=0$, }\\
\mbox{ $L_c$ is secant\, } & \mbox{ $\Leftrightarrow$ $\sgn(f(\omega^c))=\epsilon$\hspace{3.3mm} $\Leftrightarrow$ $|(S-c)\cap I_{\cQ}|=2$. }
\end{align*}
Define $D_1:=\bigcup_{i\in I_\cQ}C_{i}^{(N,q^3)}$, where 
$C_{i}^{(N,q^3)}$ is represented by $\langle \omega^i\rangle$. 
Then, $D_1$ takes exactly three nontrivial character values:  
\begin{align}
\sum_{i \in I_\cQ}\psi_{\F_{q^3}}(\omega^{c} C_i^{(N,q^3)})
=&\,\sum_{i \in I_\cQ}\psi_{\F_{q}}(\Tr_{q^3/q}(\omega^{c+i}) \F_{q}^\ast)
=\,q|(S-c)\cap I_\cQ|-(q+1)\nonumber\\
=&\,
\left\{
\begin{array}{ll}
-1,&  \mbox{if $c\pmod*{N}\in I_\cQ$,}\\
-1+\epsilon q,&  \mbox{if 
$c\pmod*{N}  \in I_s$, }\\
-1-\epsilon q,&  \mbox{if 
$c\pmod*{N} \in I_n$,}
 \end{array}
\right.\label{eqn:sgn2}
\end{align}
where $I_s:=\{i\pmod{N}\colon \Tr_{q^3/q}(\omega^{2i})\in C_0^{(2,q)}\}$ and $I_n:=\{i\pmod{N}\colon
\Tr_{q^3/q}(\omega^{2i})\in C_1^{(2,q)}\}$.

\begin{remark}\label{rem:conic}
We define the following subsets of $\F_{q^3}$: 
\[
D_0:=\{0\}, \, 
D_1:=\bigcup_{i\in I_\cQ}C_{i}^{(N,q^3)}, \, D_2:=\bigcup_{i\in I_s}C_{i}^{(N,q^3)}, \,  D_3:=\bigcup_{i\in I_n}C_{i}^{(N,q^3)}. 
\]
In the language of association schemes, the Cayley graphs $\Cay(\F_{q^3},D_i)$, $i=0,1,2,3$, form a
three-class association scheme on $\F_{q^3}$. See \cite{BST}.
\end{remark}

We will consider a partition of  $D_1$. 
For $d_0\in I_\cQ$, we define
\[
{\mathcal X}:=\{\omega^{d_i}\Tr_{q^3/q}(\omega^{d_0+d_i})\colon 1\le i\le q\}\cup\{2  \omega^{d_0}\}
\]
and 
\[
X:=\{\log_{\omega}(x)\pmod*{2N}\colon x\in {\mathcal X}\}\subset \Z_{2N}. 
\]
It is clear that $X\equiv I_\cQ\pmod{N}$. 
We list a few properties of the 
set $X$ below. 
\begin{remark}\leavevmode
\begin{enumerate} 
\item[(i)] {\rm (\cite[Lemma~3.4]{Feng:2015aa})} If we use any other $d_i$ in place of $d_0$ in
  the definition of ${\mathcal X}$, then the resulting set $X'$ satisfies $X'\equiv X\pmod{2N}$ or
  $X'\equiv X+N\pmod{2N}$.
\item[(ii)] {\rm (\cite[Remark 3.5]{Feng:2015aa})} The set $X$ is invariant under multiplication by $q$ modulo $2N$.
\end{enumerate}
\end{remark}
The set $X$ was used to construct $\frac{(q^2-1)}{2}$-tight sets of $\cQ^+(5,q)$ in \cite{Feng:2015aa}.
We note that $\frac{(q^2-1)}{2}$-tight sets of $\cQ^+(5,q)$ were independently constructed in \cite{De-Beule:2016yu}. Surprisingly, $X$ is also behind our new $(q+1)/2$-ovoids of $\cQ^-(5,q)$.

The set $X$ can be expressed as 
\begin{equation}\label{eqn_defX}
X=2S_1''\cup (2S_2''+N)\pmod{2N}
\end{equation}
for some $S_1'',S_2''\subseteq \Z_N$ with $|S_1''|+|S_2''|=q+1$. That is, we are partitioning $X$
into the {\it even} and {\it odd} parts. Define $S_i'\equiv 2S_i''\pmod{N}$ and $S_i\equiv
2S_i'\pmod{N}$ for $i=1,2$. Then, $S_1'\cup S_2'\equiv I_\cQ\pmod{N}$ and $S_1\cup S_2\equiv
S\pmod{N}$, i.e., $X$ induces partitions of the conic $\cQ$ and the line $L_0$ respectively. We will
use this partition $S_1,S_2$ of $S$ to define our $(q+1)/2$-ovoids in the next section. Consider the
following partition of $D_1$:
\[
D_{1,1}:=\bigcup_{i\in X}C_{i}^{(2N,q^3)} \mbox{\,  and \, }  D_{1,2}:=\bigcup_{i\in X+N}C_{i}^{(2N,q^3)}.  
\] 
\begin{theorem} {\rm(\cite[Theorem 3.7, Remark 3.8]{Feng:2015aa})} \label{thm:main2}
With notation as above, the set $D_{1,1}$ takes exactly four nontrivial character values, that is,
\[
\psi_{\F_{q^3}}(\omega^c D_{1,1})=\left\{
\begin{array}{ll}
\frac{-1+\eta(2)qG_q(\eta)}{2}, & \mbox{ if $c \pmod*{N} \in I_\cQ$ and 
$c \pmod*{2N} \in X$,}\\
\frac{-1-\eta(2)qG_q(\eta)}{2}, & \mbox{ if $c \pmod*{N} \in I_\cQ$ and $c \pmod*{2N} \in X+N$,}\\
\frac{-1+\epsilon q}{2}, & \mbox{ if $c \pmod*{N} \in I_s$}, \\
\frac{-1-\epsilon q}{2}, & \mbox{ if $c \pmod*{N} \in I_n$,}
 \end{array}
\right.
\]
where $\eta$ is the quadratic character of $\F_q$. 
\end{theorem}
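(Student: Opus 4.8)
The plan is to evaluate $\psi_{\F_{q^3}}(\omega^c D_{1,1})$ by comparing $D_{1,1}$ with $D_1$. Since $q$ is odd we have $2N\mid q^3-1$, so the index-$2N$ cyclotomic classes refine the index-$N$ ones and $D_1=D_{1,1}\sqcup D_{1,2}$. First I would unwind the even/odd splitting $X=2S_1''\cup(2S_2''+N)$: with $S_i'\equiv 2S_i''\pmod N$ (so $S_1'\sqcup S_2'=I_\cQ$) and $\chi_2'$ the quadratic character of $\F_{q^3}$, one checks that for $x=\omega^k$ with $k\bmod N\in I_\cQ$ one has $x\in D_{1,1}$ exactly when $\chi_2'(x)=+1$ and $k\bmod N\in S_1'$, or $\chi_2'(x)=-1$ and $k\bmod N\in S_2'$. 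Equivalently $2\cdot\mathds{1}_{D_{1,1}}=\mathds{1}_{D_1}+\chi_2'\cdot(\mathds{1}_{E_1}-\mathds{1}_{E_2})$ with $E_j:=\bigcup_{i\in S_j'}C_i^{(N,q^3)}$, so that
\[
2\,\psi_{\F_{q^3}}(\omega^c D_{1,1})=\psi_{\F_{q^3}}(\omega^c D_1)+\sum_{x\in E_1}\chi_2'(x)\,\psi_{\F_{q^3}}(\omega^c x)-\sum_{x\in E_2}\chi_2'(x)\,\psi_{\F_{q^3}}(\omega^c x).
\]
The first term on the right is exactly the sum evaluated in \eqref{eqn:sgn2}, namely $-1$, $-1+\epsilon q$, or $-1-\epsilon q$ according as $c\bmod N$ lies in $I_\cQ$, $I_s$, or $I_n$; this already accounts for the $(-1\pm\epsilon q)/2$ cases and the $-1/2$ in the first two cases.

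Next I would evaluate the ``twisted'' sum $T(c):=\sum_{x\in E_1}\chi_2'(x)\psi_{\F_{q^3}}(\omega^c x)-\sum_{x\in E_2}\chi_2'(x)\psi_{\F_{q^3}}(\omega^c x)$. The key point is that the index-$N$ subgroup $C_0^{(N,q^3)}$ of $\F_{q^3}^\ast$ is the unique subgroup of order $q-1$, hence equals $\F_q^\ast$; thus $C_i^{(N,q^3)}=\omega^i\F_q^\ast$ and $\chi_2'|_{\F_q^\ast}=\eta$, which gives
\[
\sum_{x\in C_i^{(N,q^3)}}\chi_2'(x)\,\psi_{\F_{q^3}}(\omega^c x)=\chi_2'(\omega^i)\sum_{\lambda\in\F_q^\ast}\eta(\lambda)\,\psi_{\F_q}\bigl(\lambda\,\Tr_{q^3/q}(\omega^{c+i})\bigr),
\]
which is $0$ if $\Tr_{q^3/q}(\omega^{c+i})=0$ and is $\chi_2'(\omega^i)\,\eta(\Tr_{q^3/q}(\omega^{c+i}))\,G_q(\eta)$ otherwise. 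Summing with sign $\sigma_i=+1$ for $i\in S_1'$ and $\sigma_i=-1$ for $i\in S_2'$, I get $T(c)=G_q(\eta)\,\Sigma(c)$ with $\Sigma(c):=\sum_{i\in I_\cQ}\sigma_i\,\chi_2'(\omega^i)\,\eta(\Tr_{q^3/q}(\omega^{c+i}))$ (terms with $\la\omega^i\ra\in L_c$ contributing $0$). So everything reduces to proving that $\Sigma(c)=0$ when $c\bmod N\in I_s\cup I_n$, and $\Sigma(c)=\eta(2)q$ (resp.\ $-\eta(2)q$) when $c\bmod N\in I_\cQ$ and $c\bmod 2N\in X$ (resp.\ $X+N$).

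The evaluation of $\Sigma(c)$ is the heart of the matter, and is where the delicate definition of $\mathcal X$ must be used. Writing $I_\cQ=\{d_0,\dots,d_q\}$ as in $\mathcal X$, the element $\mathcal X_k=\omega^{d_k}\Tr_{q^3/q}(\omega^{d_0+d_k})$ ($1\le k\le q$) records that $\sigma_{d_k}\chi_2'(\omega^{d_k})=\eta(\Tr_{q^3/q}(\omega^{d_0+d_k}))$, while the exceptional element $2\omega^{d_0}$ gives $\sigma_{d_0}\chi_2'(\omega^{d_0})=\eta(2)$; consequently
\[
\Sigma(c)=\sum_{k=1}^{q}\eta\!\bigl(\Tr_{q^3/q}(\omega^{d_0+d_k})\,\Tr_{q^3/q}(\omega^{c+d_k})\bigr)+\eta(2)\,\eta\!\bigl(\Tr_{q^3/q}(\omega^{c+d_0})\bigr),
\]
which, up to square factors, equals $\sum_{P\in\cQ,\,P\ne\la\omega^{d_0}\ra}\eta\bigl(B(\omega^{d_0},P)\,B(\omega^c,P)\bigr)+\eta\bigl(B(\omega^{d_0},\omega^c)\bigr)$, where $B$ is the polar form of $\cQ$. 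For $c\bmod N\in I_\cQ$ one may take $c=d_0$; then $B(\omega^{d_0},P)\ne 0$ for $P\ne\la\omega^{d_0}\ra$, the $\pm1$-valued terms all collapse to $+1$, giving $\Sigma(d_0)=q$, and the sign in the general case follows from $\eta(\omega^N)=-1$ together with Remark~(i). For $c\bmod N\in I_s\cup I_n$ one parametrises $\cQ$ by $\PG(1,q)$, so that $B(\omega^{d_0},\cdot)$ and $B(\omega^c,\cdot)$ pull back to binary quadratic forms on $\PG(1,q)$ — the former a perfect square (as $L_{d_0}$ is tangent at $\la\omega^{d_0}\ra$), the latter nondegenerate (as $\la\omega^c\ra\notin\cQ$); the classical fact that a nondegenerate binary quadratic form, summed against $\eta$ over $\PG(1,q)$, gives $0$ then leaves only a boundary term which, after a short normalisation check, cancels the term $\eta(B(\omega^{d_0},\omega^c))$, so $\Sigma(c)=0$. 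I expect this last step to be the main obstacle: pinning down the normalisations (the square factors hidden in ``up to square factors'', the leading coefficient of the pulled-back tangent form, the choice of parametrisation) accurately enough that $\Sigma(c)$ collapses to exactly $0$ off the conic and exactly $\pm q$ on it, rather than to some nonzero $O(\sqrt q)$ remainder. This computation is carried out in \cite[Theorem~3.7, Remark~3.8]{Feng:2015aa}, whose argument I would follow.
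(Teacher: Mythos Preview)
The paper does not supply its own proof of this theorem: it is quoted verbatim from \cite[Theorem~3.7, Remark~3.8]{Feng:2015aa}, with the single comment in Remark~\ref{rem:conic2}(i) that the case $q\equiv 3\pmod 4$ is handled the same way. So there is nothing in the paper to compare your argument against beyond the citation itself.

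Your sketch, on the other hand, is a genuine outline of the argument and is essentially correct. The identity $2\,\mathds{1}_{D_{1,1}}=\mathds{1}_{D_1}+\chi_2'(\mathds{1}_{E_1}-\mathds{1}_{E_2})$ is right (since $N$ is odd, the parity of an element of $X$ encodes membership in $S_1'$ versus $S_2'$ exactly as you say), and the reduction of the twisted piece to $T(c)=G_q(\eta)\,\Sigma(c)$ via $C_0^{(N,q^3)}=\F_q^\ast$ and $\chi_2'|_{\F_q^\ast}=\eta$ is clean. For $c=d_0$ you correctly obtain $\Sigma(d_0)=q$; to see this matches the theorem, note that $d_0\pmod{2N}\in X$ iff $\sigma_{d_0}\chi_2'(\omega^{d_0})=1$, i.e., iff $\eta(2)=1$, so the value $\tfrac{-1+qG_q(\eta)}{2}$ agrees with the stated $\tfrac{-1\pm\eta(2)qG_q(\eta)}{2}$ in both cases. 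Your appeal to Remark~(i) for the other $d_j$ is also right: if $X_j$ (the set built from $d_j$) equals $X$ then $\sigma^{(j)}=\sigma$ and $\Sigma(d_j)=q$, while if $X_j=X+N$ then $\sigma^{(j)}=-\sigma$ and $\Sigma(d_j)=-q$; combined with $d_j\in X\Leftrightarrow \sigma_{d_j}\chi_2'(\omega^{d_j})=1$ this gives the claimed dichotomy. The off-conic vanishing $\Sigma(c)=0$ is indeed the most delicate step; your parametrisation-by-$\PG(1,q)$ strategy is the standard one, and since you end by deferring to \cite{Feng:2015aa} you are in exactly the same position as the paper.
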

\begin{remark}\label{rem:conic2}\
\begin{enumerate}
\item[(i)] In \cite{Feng:2015aa}, the authors treated only the case where $q\equiv 1\pmod{4}$ of
  Theorem~\ref{thm:main2}. The case $q\equiv 3\pmod{4}$ can be proved in a similar way.
\item[(ii)] In the language of association schemes, the Cayley graphs $\Cay(\F_{q^3},D_i)$,
  $i=0,2,3$, and $\Cay(\F_{q^3},D_{1,j})$, $j=1,2$, form a four-class association scheme on
  $\F_{q^3}$, which is a fission scheme of that of Remark~\ref{rem:conic}.
\end{enumerate}
\end{remark}

\section{New $\frac{q+1}{2}$-ovoids of $\cQ^{-}(5,q)$}\label{section:construction} In the rest of
this paper, we assume that $q\equiv 3\pmod{4}$ is a prime power. In this section, we give a
construction of $\frac{q+1}{2}$-ovoids of $\cQ^{-}(5,q)$.

\subsection{Construction of $\frac{q+1}{2}$-ovoids of $\cQ^{-}(5,q)$}
Consider the following bilinear form from $\F_{q^6}^2$ to $\F_q$: 
 \[
B(x,y):=\Tr_{q^6/q}(xy^{q^3}). 
\]
This form is symmetric and defines an elliptic orthogonal space isomorphic to $\cQ^{-}(5,q)$, where
the associated quadratic form is given by $Q(x)=\Tr_{q^3/q}(x^{q^3+1})$. We now define a subset $D$
of the elliptic quadric $\{x\in\F_{q^6}^\ast\colon \Tr_{q^3/q}(x^{q^3+1})=0\}$.

\begin{construction}\label{mainconstruction}
Let $S_1, S_2$ be the partition of the Singer difference set $S$ defined by $S_1'',S_2''$ of
\eqref{eqn_defX}. Let $J_1:=\{0,3\}$ and $J_2:=\{1,2\}$, and put
\[
I:=\{Ni-(q+1)j \hspace{-0.2cm}\pmod{4N}: (i,j)\in (J_1\times S_1) \cup (J_2\times S_2)\}.  
\]
Now, define 
\[
D:=\bigcup_{i \in I}C_i^{(4N,q^6)},
\]
where $C_i^{(4N,q^6)}:=\gamma^i C_0$ with $C_0$ the subgroup of index $4N$ of $\F_{q^6}^\ast$ and
$\gamma$ a fixed primitive element of $\F_{q^6}$ such that $\gamma^{q^3+1}=\omega$ (where $\omega$
was defined in Section \ref{section:conic}).
\end{construction}

It is clear that $|D|=(q^3+1)(q^2-1)/2$. Furthermore, $D$ is a subset of $\{x\in \F_{q^6}\colon
\Tr_{q^3/q}(x^{q^3+1})=0\}$. In fact, for any $x=\gamma^{Ni-(q+1)j+s}\in D$ with $\gamma^s\in
C_0^{(4N,q^6)}$
\begin{align*}
\Tr_{q^3/q}(\gamma^{(Ni-(q+1)j+s)(q^3+1)})&\,=
\Tr_{q^3/q}(\omega^{Ni-(q+1)j+s})\\
&\, =\omega^{Ni+s-Nj}\Tr_{q^3/q}(\omega^{Nj-(q+1)j})\\
&\, =\omega^{Ni+s-Nj}\Tr_{q^3/q}(\omega^{j})=0. 
\end{align*}

The following is our main theorem which will be proved in the next section. 
\begin{theorem}\label{thm:main}
The set $\cM$ of points in $\PG(5,q)$ corresponding to $D$ (defined in Construction
\ref{mainconstruction}) forms a $(q+1)/2$-ovoid of $\cQ^{-}(5,q)$.
\end{theorem}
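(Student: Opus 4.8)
The plan is to verify the character-sum condition of Result~\ref{res_charD} (in the more explicit form \eqref{eqn:polarD}) directly for the set $D$ of Construction~\ref{mainconstruction}, with $m=(q+1)/2$. Since $D$ is a union of cyclotomic classes $C_i^{(4N,q^6)}$, $i\in I$, of index $k=4N=4(q^2+q+1)$ in $\F_{q^6}^\ast$, every nontrivial additive character value $\psi_a(D)$ is (after writing $a=\gamma^{-c}$) a Gauss period sum $\sum_{i\in I}\psi_{\F_{q^6}}(\gamma^{c}C_i^{(4N,q^6)})$, and by orthogonality this equals $\frac{1}{4N}\sum_{i\in I}\sum_{t=0}^{4N-1}G_{q^6}(\chi^t)\chi^{-t}(\gamma^{c+i})$ for a fixed character $\chi$ of order $4N$ of $\F_{q^6}$. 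So the proof reduces to (i) evaluating the relevant Gauss sums and (ii) summing the resulting geometric-type series over $i\in I$, then (iii) checking the answer is one of the two values in \eqref{eqn:polarD}, distinguishing the two cases by whether $a^{q^3}=\gamma^{-cq^3}$ lies in $D$, i.e.\ whether $-cq^3\pmod{4N}\in I$.

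The key computational input is that $4N=4\cdot N$ with $N=q^2+q+1$ odd, so each character $\chi^t$ of order dividing $4N$ factors (up to a power) as $\chi_4^{a}\chi_m$ where $\chi_4$ has order dividing $4$ and $\chi_m$ has order dividing $N$; moreover, since $p$ is semi-primitive mod $N$ (as $q^3\equiv 1\pmod N$ gives the needed relation once one checks the order condition, and $p$ is semi-primitive mod $4$ trivially for $q\equiv 3\pmod4$), Theorems~\ref{thm:semiprim}, \ref{thm:lift}, \ref{thm:Stickel2} apply. The crucial reduction is Theorem~\ref{Gaussmain} together with Corollary~\ref{cor:Gauss}: for $m\mid N$ odd, $G_{q^6}(\chi_4\chi_m)=\rho_q\, q^3 G_{q^3}(\chi_2'{\chi_m'}^2)/G_{q^3}(\chi_2')$, which expresses the order-$4N$ Gauss sums over $\F_{q^6}$ in terms of Gauss sums over $\F_{q^3}$ — precisely the objects controlling the conic partition $S_1,S_2$ in Section~\ref{section:conic}. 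I would then organize the sum over $i\in I$ using the product structure $I=\{Ni'-(q+1)j : (i',j)\in(J_1\times S_1)\cup(J_2\times S_2)\}$: the $J_1/J_2$ (i.e.\ index-$4$) coordinate interacts with $\chi_4$, and the $S_1/S_2\subset\Z_N$ coordinate interacts with $\chi_m$ via the character sums $\sum_{j\in S_i}\chi_m^{-t}(\omega^j)$, which are exactly the Gauss-period quantities controlled by Theorem~\ref{thm:main2} and \eqref{eqn:sgn2}. The sign $\rho_q$ and the factor $\eta(2)qG_q(\eta)$ appearing in Theorem~\ref{thm:main2} should combine so that the contributions from the ``conic'' characters telescope correctly; the terms indexed by $I_s,I_n$ (secant/exterior lines) should cancel in pairs or combine cleanly, as they do in \eqref{eqn:sgn2}, while the terms indexed by $I_\cQ$ are split by the $X$ versus $X+N$ dichotomy — this is the whole point of the even/odd partition \eqref{eqn_defX}.

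I expect the main obstacle to be a careful bookkeeping of two orthogonal ``layers'' of cyclotomy simultaneously: the index-$4$ layer over $\F_{q^6}$ (governed by $\chi_4$, $J_1=\{0,3\}$ versus $J_2=\{1,2\}$, and the quadratic-character factor $\eta$ from Theorem~\ref{thm:main2}) and the index-$N$ layer (governed by the Singer difference set $S$, its partition $S_1\cup S_2$, and the conic $\cQ$). One must track the constant $-1$ terms (from trivial $\chi^t$, property (iii) of Gauss sums), the ``half'' terms $\frac{-1\pm\eta(2)qG_q(\eta)}{2}$, and the $\pm\epsilon q$ terms, reassemble them over all $t$, and show the total collapses to $-q^3+(q+1)(q-1)/2$ when $a^{q^3}\in D$ and to $(q+1)(q-1)/2$ otherwise. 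A secondary subtlety is matching the membership condition $a^{q^3}\in D$ with a condition on the conic partition: since $\gamma^{q^3+1}=\omega$, the map $x\mapsto x^{q^3}$ on exponents mod $4N$ must be tracked against $I$, and one needs that $I$ is invariant (or transforms predictably) under multiplication by $q^3$, which should follow from Remark (ii) after \eqref{eqn_defX} (the set $X$ is invariant under multiplication by $q$ mod $2N$) combined with the structure of $J_1,J_2$. Once these two layers are disentangled, the verification of \eqref{eqn:polarD} — and hence Theorem~\ref{thm:main} via Result~\ref{res_charD} — should be a matter of assembling the pieces already prepared in Sections~\ref{section:prelim} and \ref{section:conic}.
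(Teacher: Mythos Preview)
Your proposal is correct and follows essentially the same route as the paper: express $\psi_{\F_{q^6}}(\gamma^a D)$ via orthogonality as a sum over characters of order dividing $4N$, split according to the $\chi_4$-component (the paper's $P_1,P_2,P_3$ with $h_1=0,2,\{1,3\}$), evaluate the $h_1\in\{1,3\}$ piece using Theorem~\ref{Gaussmain}/Corollary~\ref{cor:Gauss} to reduce to $\F_{q^3}$, and then feed in Theorem~\ref{thm:main2} and \eqref{eqn:sgn2} to collapse everything to the two values in \eqref{eqn:polarD}. One small correction: $p$ is \emph{not} semi-primitive modulo $N=q^2+q+1$ in general (indeed $q^3\equiv 1$, not $-1$, mod $N$); what you actually need there is the Davenport--Hasse lifting formula (Theorem~\ref{thm:lift}), since $\chi_N$ is the lift of $\chi_N'$ from $\F_{q^3}$ --- the semi-primitive evaluation (Theorem~\ref{thm:semiprim}) is only used for $G_{q^6}(\chi_4)$.
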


\subsection{Computations of character values} By Result~\ref{res_charD}, we only need to show that
the (additive) character values of $D$ take the prescribed values. In particular, it suffices to
show that for all $\gamma^a\in \F_{q^6}^\ast$,
\[
\psi_{\F_{q^6}}(\gamma^a D)
=\begin{cases}-q^3+\frac{q^2-1}{2},\quad &\textup{if $\gamma^{a q^3}\in D$},\\
\frac{q^2-1}{2},\quad & \textup{otherwise, }\end{cases}
\]
where $\psi_{\F_{q^6}}$ is the canonical additive character of $\F_{q^6}$. 

Let $\chi_{4}$, $\chi_{N}$, and $\chi_{4N}$ be multiplicative characters of order $4$, $N$, and $4N$
of $\F_{q^6}$, respectively. By the orthogonality of characters, we have
\begin{equation}\label{eq:fi1}
\psi_{\F_{q^6}}(\gamma^a D)=\frac{1}{4N}\sum_{h=0}^{4N-1}G_{q^6}(\chi_{4N}^h)\sum_{i\in I}\chi_{4N}^{-h}(\gamma^{a+i}). 
\end{equation}
Since $\gcd{(4,N)}=1$, $\chi_{4N}^h$ is uniquely expressed as $\chi_{4}^{h_1}\chi_{N}^{h_2}$ for some  $(h_1,h_2)\in 
\Z_{4}\times \Z_N$. Then,   the right hand side of Eq.~\eqref{eq:fi1} is rewritten as 
\begin{align}
&\,\frac{1}{4N}
\sum_{h_1=0,1,2,3}\sum_{h_2=0}^{N-1}
G_{q^6}(\chi_{4}^{h_1}\chi_{N}^{h_2})
\big(\sum_{j\in J_1}\sum_{s\in S_1}\chi_{4}^{-h_1}(\gamma^{a+Nj})\chi_{N}^{-h_2}(\gamma^{a-(q+1)s})\nonumber\\
&\hspace{3cm} 
+\sum_{j\in J_2}\sum_{s\in S_2}\chi_{4}^{-h_1}(\gamma^{a+Nj})\chi_{N}^{-h_2}(\gamma^{a-(q+1)s})\big).\label{eq:init}
\end{align}
By noting that each $S_i$ is invariant under multiplication by $q$ modulo $N$, we have  
\begin{align}
\psi_{\F_{q^6}}(\gamma^a D)=&\,
\frac{1}{4N}
\sum_{h_1=0,1,2,3}\sum_{h_2=0}^{N-1}
G_{q^6}(\chi_{4}^{h_1}\chi_{N}^{h_2})
\big(\sum_{j\in \{0,3\}}\sum_{s\in S_1}\chi_{4}^{-h_1}(\gamma^{a+Nj})\chi_{N}^{-h_2}(\gamma^{a+q^2 s})\nonumber\\
&\hspace{1cm}
+\sum_{j\in \{1,2\}}\sum_{s\in S_2}\chi_{4}^{-h_1}(\gamma^{a+Nj})\chi_{N}^{-h_2}(\gamma^{a+q^2 s})\big) \nonumber\\
=&\,
\frac{1}{4N}
\sum_{h_1=0,1,2,3}\sum_{h_2=0}^{N-1}
G_{q^6}(\chi_{4}^{h_1}\chi_{N}^{h_2})
\big(\sum_{j\in \{0,3\}}\sum_{s\in S_1}\chi_{4}^{-h_1}(\gamma^{a+Nj})\chi_{N}^{-h_2}(\gamma^{a+ s})\nonumber\\
&\hspace{1cm} 
+\sum_{j\in \{1,2\}}\sum_{s\in S_2}\chi_{4}^{-h_1}(\gamma^{a+Nj})\chi_{N}^{-h_2}(\gamma^{a+ s})\big).
\label{eq:ppppart} 
\end{align}

We compute the right hand side of Eq.~\eqref{eq:ppppart} by dividing it into the three partial sums:
$P_1,P_2$ and $P_3$, where $P_1$ is the contribution of the summands with $h_1=0$, $P_2$ is the
contribution of the summands with $h_1=2$, and $P_3$ is the contribution of the summands with
$h_1=1$ or $3$. That is, we have
\begin{equation}\label{eq:p1p2p3}
\psi_{\F_{q^6}}(\gamma^a D)=P_1+P_2+P_3. 
\end{equation}
It is clear that $P_2=0$ since \[
\sum_{j\in \{0,3\}}\chi_4^{-2}(\gamma^{a+Nj})=\sum_{j\in \{1,2\}}\chi_4^{-2}(\gamma^{a+Nj})=0. 
\]
We consider the partial sum $P_1$. 
\begin{lemma}\label{lem:res1}
It holds that 
\[
P_1=\left\{
\begin{array}{ll}
\frac{-q^3+q^2-1}{2},&  \mbox{if $a\in S \pmod*{N}$,}\\
\frac{q^2-1}{2},&  \mbox{if $a\not\in S \pmod*{N}$. }
 \end{array}
\right.
\]
\end{lemma}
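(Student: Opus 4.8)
The plan is to isolate the $h_1=0$ contribution to \eqref{eq:ppppart} and show it reduces to a character sum over $D_1$ (equivalently, over the Singer difference set $S$) on $\F_{q^3}$, which has already been evaluated in \eqref{eqn:sgn2}. When $h_1=0$ the factor $\chi_4^{-h_1}(\gamma^{a+Nj})$ is identically $1$, so the inner sum over $j\in J_1$ (resp. $j\in J_2$) simply contributes a factor of $2=|J_i|$. Thus
\[
P_1=\frac{2}{4N}\sum_{h_2=0}^{N-1}G_{q^6}(\chi_N^{h_2})\Big(\sum_{s\in S_1}\chi_N^{-h_2}(\gamma^{a+s})+\sum_{s\in S_2}\chi_N^{-h_2}(\gamma^{a+s})\Big)
=\frac{1}{2N}\sum_{h_2=0}^{N-1}G_{q^6}(\chi_N^{h_2})\sum_{s\in S}\chi_N^{-h_2}(\gamma^{a+s}),
\]
using $S_1\cup S_2=S$ (a disjoint union mod $N$). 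The key point is that $\chi_N$ is trivial on $\F_q^\ast$ and, since $\gcd(N, (q^3-1)/N)$-considerations give $\chi_N = \chi_N' \circ \Norm_{q^6/q^3}$ for a character $\chi_N'$ of order $N$ on $\F_{q^3}$, one recognizes $\gamma^{a+s}$ as ranging (after the norm map, which on the relevant cyclotomic classes sends $\gamma^{a+s}\mapsto \omega^{a+s}$ up to the irrelevant $C_0$-shift) over the points $\la\omega^{a+s}\ra$, $s\in S$. Hence the inner character sum is, up to the Davenport--Hasse lifting relation $G_{q^6}(\chi_N^{h_2})$ vs.\ $G_{q^3}({\chi'_N}^{h_2})$, exactly the Gauss-period expression whose value is recorded in \eqref{eqn:sgn2} for $D_1=\bigcup_{i\in I_\cQ}C_i^{(N,q^3)}$ — but note $S$ (the line $L_0$), not $I_\cQ$ (the conic), is what appears here.

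More precisely, I would argue that $P_1$ equals $\frac{1}{q}\big(\text{(the number of }s\in S\text{ with }a+s\equiv 0\bmod N)\cdot(\text{something})\big)$ after applying orthogonality in the reverse direction: the sum $\frac{1}{N}\sum_{h_2}G_{q^6}(\chi_N^{h_2})\chi_N^{-h_2}(\gamma^{a+s})$ is the Gauss period $\psi_{\F_{q^6}}(\gamma^{a+s}C_0^{(N,q^6)})$, and summing this over $s\in S$ is $\psi_{\F_{q^6}}(\gamma^a \bigcup_{s\in S}C_s^{(N,q^6)})$. This last set is precisely $\{x\in\F_{q^6}^\ast : \Tr_{q^3/q}((x^{q^3+1})\cdot\text{something})=0\}$-type — in fact it is the full preimage under $x\mapsto x^{q^3+1}$ of $L_0$'s underlying cone, and its additive character values are governed by whether $\la\omega^a\ra \in L_0$, i.e.\ whether $a\in S\pmod N$. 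A direct computation $\psi_{\F_{q^6}}(\gamma^a \bigcup_{s\in S}C_s^{(N,q^6)}) = \sum_{\la y\ra\in L_0}\sum_{\lambda\in\F_q^\ast}\psi_{\F_{q^6}}(\gamma^a \lambda y') $ — where $y'$ runs over coset representatives — collapses via $\sum_{\lambda\in\F_q^\ast}\psi_{\F_q}(c\lambda)=q\mathds{1}_{c=0}-1$ to $q\cdot|L_0\cap\{\la\omega^a\ra\}^\perp\text{-type condition}|-|L_0|$. I expect this to yield $q^3|\{s\in S: a+s\equiv 0\}| - |S|$ scaled appropriately, giving $-q^3\cdot[a\in S] + (\text{const})$; matching the two cases against $|S|=q+1$ and the normalization $\frac{1}{2}$ (from the factor $2/(4N)\cdot N$... rather $\frac12$ overall after the $|L_0|=q+1$ bookkeeping) produces $\frac{-q^3+q^2-1}{2}$ when $a\in S$ and $\frac{q^2-1}{2}$ otherwise.

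The main obstacle is bookkeeping the passage from $\F_{q^6}$ to $\F_{q^3}$ cleanly: one must track that $\chi_N^{h_2}$ really does factor through the norm (this uses $q^3\equiv 1\pmod N$ so that $\F_q^\ast\subseteq \ker\chi_N$, and that the order-$N$ part of $\F_{q^6}^\ast$ is ``the same'' as that of $\F_{q^3}^\ast$), and that the $h_2=0$ term contributes $G_{q^6}(\chi_N^0)=-1$ correctly to the constants. A cleaner route, and the one I would actually write, bypasses Gauss sums entirely: observe that the $h_1=0$ part of $D$ is, as a multiset on $\F_{q^6}$ weighted by the $\{0,3\}$ and $\{1,2\}$ index choices which each have size $2$, simply $2\cdot\bigcup_{s\in S}C_s^{(N,q^6)}$ restricted to the quadric — and then compute $\psi_{\F_{q^6}}(\gamma^a(\cdot))$ directly as a sum over the line $L_0$ using $\sum_{\lambda\in\F_q^\ast}\psi_{\F_q}(c\lambda) = q\mathds{1}_{\{c=0\}}-1$, exactly as in the derivation of Result~\ref{res_charD}. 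This reduces Lemma~\ref{lem:res1} to the single clean statement that $a+s\equiv 0\pmod N$ has a (necessarily unique, when it exists) solution $s\in S$ iff $-a\in S\pmod N$ iff $a\in S\pmod N$ (using $-1$ acting on the Singer cycle, or simply that $S=-S$ is not needed — rather $\la\omega^a\ra\in L_0\Leftrightarrow \la\omega^{-a}\ra \in L_0^{\text{polar}}$; I will use instead that the relevant condition is $\Tr_{q^3/q}(\omega^{a}\cdot\text{rep})=0$), finishing the count.
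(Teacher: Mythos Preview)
Your opening reduction is correct and matches the paper's starting point: when $h_1=0$ the $J_i$-sums each contribute $|J_i|=2$, and since $S_1\cup S_2=S$,
\[
P_1=\frac{1}{2N}\sum_{h_2=0}^{N-1}G_{q^6}(\chi_N^{h_2})\sum_{s\in S}\chi_N^{-h_2}(\gamma^{a+s})
=\frac12\,\psi_{\F_{q^6}}\Bigl(\gamma^a\bigcup_{s\in S}C_s^{(N,q^6)}\Bigr).
\]
Your instinct to bypass Gauss sums from here is sound and leads to a genuinely different argument from the paper's---but you never quite land the one key identification, and the explicit computations you attempt in its place are wrong. The set $\bigcup_{s\in S}C_s^{(N,q^6)}$ is not to be analyzed via $L_0\subset\PG(2,q)$; it is exactly the set of nonzero singular vectors $\{x\in\F_{q^6}^*: Q(x)=0\}$, i.e., the full quadric $\cQ^-(5,q)$ in its vector realization. (You essentially say this---``the full preimage under $x\mapsto x^{q^3+1}$ of $L_0$'s underlying cone'' is just the condition $\Tr_{q^3/q}(x^{q^3+1})=0$---but then you slide back to counting $L_0$-intersections in $\PG(2,q)$.) Once this identification is made, $2P_1$ is precisely the character sum already evaluated in the derivation of Result~\ref{res_charD}, applied to the trivial $(q+1)$-ovoid $\cM=\cQ^-(5,q)$: it equals $-q^3+(q+1)(q-1)$ if $\la\gamma^{aq^3}\ra\in\cQ^-(5,q)$ and $(q+1)(q-1)$ otherwise, and the first condition unwinds to $\Tr_{q^3/q}(\omega^a)=0$, i.e.\ $a\in S\pmod N$. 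That finishes the lemma in one line. Your formulas ``$q\cdot|L_0\cap\{\la\omega^a\ra\}^\perp\text{-type}|-|L_0|$'' and ``$q^3|\{s\in S:a+s\equiv 0\}|-|S|$'' conflate the $\PG(2,q)$ and $\PG(5,q)$ pictures; the relevant count is quadric-meet-hyperplane in $\PG(5,q)$ (sizes $q(q^2+1)+1$ or $(q+1)(q^2+1)$ by Lemma~\ref{res_1}), not anything about the line $L_0$.

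The paper's own proof stays at the Gauss-sum level throughout: it uses Theorem~\ref{thm:Yama} to rewrite $\sum_{s\in S}\chi_N^{-h_2}(\gamma^s)$ as $G_{q^3}({\chi'}_N^{-h_2})/q$, uses Theorem~\ref{thm:lift} to write $G_{q^6}(\chi_N^{h_2})=-G_{q^3}({\chi'}_N^{h_2})^2$, collapses the resulting product via $G(\chi)G(\chi^{-1})=q^3$, and then applies orthogonality a second time. Your route (once the quadric identification is made cleanly) is shorter and more conceptual; the paper's route rehearses the Gauss-sum mechanics that are genuinely needed later for~$P_3$.
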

\proof 
We now compute 
\begin{equation}
 P_1= 
\frac{1}{2N}\sum_{h_2=0}^{N-1}
G_{q^6}(\chi_{N}^{h_2})
\sum_{s\in S}\chi_{N}^{-h_2}(\gamma^{a+ s}). \label{eq:quadpart}
\end{equation}
Let $\chi_N'$ be the multiplicative character of order $N $ of $\F_{q^3}$ such that 
$\chi_N$ is the lift of $\chi_N'$. 
Since  
\[
G_{q^3}({\chi'}_N^{-h_2})=q \sum_{s\in S}{\chi'}_{N}^{-h_2}(\omega^{s})=q \sum_{s\in S}\chi_{N}^{-h_2}(\gamma^{s})
\]
and 
\[
G_{q^6}(\chi_{N}^{h_2})=-G_{q^3}({\chi'}_N^{h_2})^2
\]
by 
Theorems~\ref{thm:Yama} and \ref{thm:lift}, respectively,  
continuing from  \eqref{eq:quadpart}, we have 
\begin{align*}
P_1+\frac{q+1}{2N}=&\, -\frac{1}{2Nq}\sum_{h_2=1}^{N-1}
G_{q^3}({\chi'}_N^{h_2})^2
G_{q^3}({\chi'}_N^{-h_2})
{\chi'}_{N}^{-h_2}(\omega^{a})\\
=&\,  -\frac{q^2}{2N}\sum_{h_2=1}^{N-1}
G_{q^3}({\chi'}_N^{h_2})
{\chi'}_{N}^{-h_2}(\omega^{a})=-\frac{q^3}{2N}\sum_{h_2=0}^{N-1}
\sum_{s\in S}{\chi'}_{N}^{-h_2}(\omega^{-s+a})+\frac{q^3(q+1)}{2N}\\
=&\, \left\{
\begin{array}{ll}
\frac{q^3(q+1)}{2N}-\frac{q^3}{2},&  \mbox{if $a\in S \pmod*{N}$,}\\
\frac{q^3(q+1)}{2N},&  \mbox{if $a\not\in S \pmod*{N}$. }
 \end{array}
\right.
\end{align*}
The conclusion of the lemma now follows. \qed 

\vspace{0.3cm} Next, we evaluate the partial sum $P_3$. Recall that $S_i'\equiv 2^{-1}S_i\pmod{N}$,
$S_i''\equiv 2^{-1}S_i'\pmod{N}$, $i=1,2$, and
\[
X=2S_1''\cup (2S_2''+N) \pmod{2N}. 
\]
\begin{lemma}\label{lem:res2}
Let 
$b\equiv 4^{-1}a\pmod{N}$ and $c\equiv 2b\pmod{2N}$. 
Then, it holds that 
\begin{equation}\label{eq:p2}
P_3=\frac{\rho_q \delta_a q^3}{G_{q^3}({\chi'}_2)}
\psi_{\F_{q^3}}(\omega^c\bigcup_{t\in X}C_t^{(2N,q^3)})-\frac{\rho_q \delta_a q^3}{2G_{q^3}({\chi'}_2)}\psi_{\F_{q^3}}(\omega^c \bigcup_{t\in I_\cQ}C_t^{(N,q^3)}). 
\end{equation}
where $\psi_{\F_{q^3}}$ is the canonical additive character of $\F_{q^3}$ and $\delta_a=1$ or $-1$
depending on whether $a\equiv 0,1\pmod{4}$ or $a\equiv 2,3\pmod{4}$.
\end{lemma}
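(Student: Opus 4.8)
The plan is to handle $P_3$ by first pairing its $h_1=1$ and $h_1=3$ contributions using the Gauss-sum symmetry of Theorem~\ref{Gaussmain}, then pushing everything down to $\F_{q^3}$ via the norm map, substituting Corollary~\ref{cor:Gauss}, and finally recognising the result as the prescribed combination of $\psi_{\F_{q^3}}(\omega^c D_{1,1})$ and $\psi_{\F_{q^3}}(\omega^c D_1)$.

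First I would isolate $P_3=\frac{1}{4N}\sum_{h_1\in\{1,3\}}\sum_{h_2=0}^{N-1}G_{q^6}(\chi_4^{h_1}\chi_N^{h_2})(\cdots)$ from \eqref{eq:ppppart}. Since $N$ is odd and $\chi_N$ was chosen as the lift of ${\chi'}_N$, the character $\chi_N^{h_2}$ has odd order dividing $N$ and is the lift of ${\chi'}_N^{h_2}$; so Theorem~\ref{Gaussmain} gives $G_{q^6}(\chi_4\chi_N^{h_2})=G_{q^6}(\chi_4^3\chi_N^{h_2})$ for every $h_2$ (for $h_2=0$ both equal $\rho_q q^3$). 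Factoring out this common Gauss sum reduces the $h_1$-sum to $\sum_{h_1\in\{1,3\}}\chi_4^{-h_1}(\gamma^{a+Nj})$; using $q\equiv3\pmod{4}$, hence $N\equiv1\pmod{4}$ and $Nj\equiv j\pmod{4}$, a short computation with fourth roots of unity shows this equals $2\delta_a$ for $j\in J_1=\{0,3\}$ and $-2\delta_a$ for $j\in J_2=\{1,2\}$, with $\delta_a$ as in the statement (this is where the specific bipartition $J_1,J_2$ is used). Replacing $\chi_N^{-h_2}(\gamma^{a+s})$ by ${\chi'}_N^{-h_2}(\omega^{a+s})$ via $\Norm_{q^6/q^3}(\gamma)=\gamma^{q^3+1}=\omega$, one arrives at
\[
P_3=\frac{\delta_a}{2N}\sum_{h_2=0}^{N-1}G_{q^6}(\chi_4\chi_N^{h_2})\Big(\sum_{s\in S_1}{\chi'}_N^{-h_2}(\omega^{a+s})-\sum_{s\in S_2}{\chi'}_N^{-h_2}(\omega^{a+s})\Big).
\]

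I would then substitute $G_{q^6}(\chi_4\chi_N^{h_2})=\rho_q q^3\,G_{q^3}({\chi'}_2{\chi'}_N^{2h_2})/G_{q^3}({\chi'}_2)$ from Corollary~\ref{cor:Gauss} (valid for all $h_2$, the case $h_2=0$ being $G_{q^6}(\chi_4)=\rho_q q^3$) and pull out the constant $\rho_q\delta_a q^3/\big(2N\,G_{q^3}({\chi'}_2)\big)$, reducing the claim to an identity between the remaining sum over $h_2$ and $\psi_{\F_{q^3}}(\omega^c D_{1,1})-\tfrac12\psi_{\F_{q^3}}(\omega^c D_1)$. Because $C_i^{(N,q^3)}=C_i^{(2N,q^3)}\sqcup C_{i+N}^{(2N,q^3)}$ and $X\equiv I_\cQ\pmod{N}$, we have $D_1=D_{1,1}\sqcup D_{1,2}$, so the target equals $\tfrac12\big(\psi_{\F_{q^3}}(\omega^c D_{1,1})-\psi_{\F_{q^3}}(\omega^c D_{1,2})\big)$. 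Expanding this difference by orthogonality over the index-$2N$ cyclotomic classes of $\F_{q^3}^\ast$, the factor $1-(-1)^k$ kills all even-index terms; each surviving character is ${\chi'}_2$ times a character of order dividing $N$, the factor ${\chi'}_2(\omega^t)=(-1)^t$ turns $X=2S_1''\cup(2S_2''+N)$ into a difference of sums over $S_1''$ and $S_2''$, and ${\chi'}_2(\omega^c)=1$ since $c\equiv2b$ is even. Matching the resulting sum with the $P_3$-sum then comes down to relabelling the summation index by $h_2\mapsto 2h_2$, together with the bookkeeping identities $S_i\equiv4S_i''\pmod{N}$ (from $S_i\equiv2S_i'\equiv4S_i''$) and $c\equiv2b\equiv2^{-1}a\pmod{N}$ (from $b\equiv4^{-1}a$): writing $s=4s''$ with $s''\in S_i''$ converts ${\chi'}_N^{-h_2}(\omega^{a+s})$ into ${\chi'}_N^{-4h_2}(\omega^{s''})\,{\chi'}_N^{-2h_2}(\omega^c)$ and $G_{q^3}({\chi'}_2{\chi'}_N^{2h_2})$ into $G_{q^3}({\chi'}_2{\chi'}_N^{j})$ with $j=2h_2$, so the two sides agree term by term.

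The conceptual heart of the proof is the first step: the equality $G_{q^6}(\chi_4\chi_N^{h_2})=G_{q^6}(\chi_4^3\chi_N^{h_2})$ is precisely what lets $P_3$ collapse from two interlocking families of Gauss periods into a single one tied to the conic partition $X$ — the whole construction hinges on it. The genuinely hard part, by contrast, is not conceptual but the bookkeeping: keeping the chain of index sets $S,S_i,S_i',S_i'',I_\cQ,X$ and the scalars $2^{\pm1},4^{\pm1}\bmod N$ mutually consistent across the two expansions, and checking that the trivial term ($h_2=0$) is handled correctly in each formula. That is where I would take the most care.
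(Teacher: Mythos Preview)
Your proof is correct and follows essentially the same route as the paper's: collapse the $h_1$-sum via the symmetry $G_{q^6}(\chi_4\chi_N^{h_2})=G_{q^6}(\chi_4^3\chi_N^{h_2})$, substitute Corollary~\ref{cor:Gauss}, and identify the result with the orthogonality expansions of $\psi_{\F_{q^3}}(\omega^c D_{1,1})$ and $\psi_{\F_{q^3}}(\omega^c D_1)$ (the paper separates the $h_2=0$ term early and recovers it later as a cancellation against the $h=N$ term of a splitting over ${\chi'}_{2N}$, whereas you keep $h_2=0$ throughout and match the target directly --- a harmless organisational difference). One small slip of wording: $\sum_{h_1\in\{1,3\}}\chi_4^{-h_1}(\gamma^{a+Nj})$ is not equal to $\pm 2\delta_a$ for each individual $j\in J_\ell$ (for instance $a\equiv 0$, $j=3$ gives $0$) but only after summing over $j\in J_\ell$; since it is precisely that sum over $j$ that enters \eqref{eq:ppppart}, the argument is unaffected.
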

\proof 
We have 
\begin{align}
P_3=&\,\frac{1}{4N}
\sum_{h_2=0}^{N-1}
G_{q^6}(\chi_{4}\chi_{N}^{h_2})
\big(\sum_{j\in \{0,3\}}\sum_{s\in S_1}\chi_{4}^{-1}(\gamma^{a+Nj})\chi_{N}^{-h_2}(\gamma^{a+s})\nonumber\\ 
&\hspace{2cm}+\sum_{j\in \{1,2\}}\sum_{s\in S_2}\chi_{4}^{-1}(\gamma^{a+Nj})\chi_{N}^{-h_2}(\gamma^{a+s})\big)\nonumber\\
&\hspace{0.6cm}+\frac{1}{4N}\sum_{h_2=0}^{N-1}
G_{q^6}(\chi_{4}^3\chi_{N}^{h_2})
\big(\sum_{j\in \{0,3\}}\sum_{s\in S_1}\chi_{4}(\gamma^{a+Nj})\chi_{N}^{-h_2}(\gamma^{a+s})\nonumber\\ 
&\hspace{2cm}+\sum_{j\in \{1,2\}}\sum_{s\in S_2}\chi_{4}(\gamma^{a+Nj})\chi_{N}^{-h_2}(\gamma^{a+s})\big)\nonumber
\end{align}
Noting that $G_{q^6}(\chi_{4}\chi_{N}^{h_2})=G_{q^6}(\chi_{4}^3\chi_{N}^{h_2})$ by Theorem~2.7, we have
\begin{align}
P_3=&\,\frac{\delta_a}{2N}
\sum_{h_2=1}^{N-1}
G_{q^6}(\chi_{4}\chi_{N}^{h_2})
\big(\sum_{s\in S_1}\chi_{N}^{-h_2}(\gamma^{a+s})-\sum_{s\in S_2}\chi_{N}^{-h_2}(\gamma^{a+s})\big)\nonumber\\
&\hspace{0.6cm} +\frac{\delta_a}{2N}
G_{q^6}(\chi_{4})(|S_1|-|S_2|), \label{eq:fi2}
\end{align}
where $\delta_a=1$ or $-1$ depending on whether $a\equiv 0,1\pmod{4}$ or $a\equiv 2,3\pmod{4}$. Note
that $G_{q^6}(\chi_4)=\rho_q q^3$ by Theorem~\ref{thm:semiprim}. We now compute the former summand
of \eqref{eq:fi2}. Applying Theorem~\ref{Gaussmain}, we have
\begin{align}
&
\sum_{h_2=1}^{N-1}
G_{q^6}(\chi_{4}\chi_{N}^{h_2})
\big(\sum_{s\in S_1}\chi_{N}^{-h_2}(\gamma^{a+s})-\sum_{s\in S_2}\chi_{N}^{-h_2}(\gamma^{a+s})\big)\nonumber\\
=&\, \frac{\rho_q q^3}{G_{q^3}(\chi_2')}\sum_{h_2=1}^{N-1}
G_{q^3}(\chi_2'{\chi'}_N^{2h_2})
\big(\sum_{s\in S_1}{\chi'}_{N}^{-h_2}(\omega^{a+s})-\sum_{s\in S_2}{\chi'}_{N}^{-h_2}(\omega^{a+s})\big)\nonumber\\
=&\,\frac{\rho_q q^3}{G_{q^3}(\chi_2')}\sum_{h_2=1}^{N-1}
G_{q^3}(\chi_2'{\chi'}_N^{2h_2}){\chi'}_{N}^{-2h_2}(\omega^{2b})
\big(\sum_{s\in S_1'}{\chi'}_{N}^{-2h_2}(\omega^{s})-\sum_{s\in S_2'}{\chi'}_{N}^{-2h_2}(\omega^{s})\big)\nonumber\\
=&\,\frac{\rho_q q^3}{G_{q^3}({\chi'}_2)}\sum_{h_2=1}^{N-1}
G_{q^3}(\chi_2'{\chi'}_N^{2h_2})
\sum_{t\in X}\chi_2'{\chi'}_{N}^{-2h_2}(\omega^{t+c}). \label{eq:cent}
\end{align}
Put the value of \eqref{eq:cent} as $T$. 
Let $\chi_{2N}':=\chi_2'\chi_N'$, which is a multiplicative character of order $2N$ of $\F_{q^3}$. 
Then, we have 
\begin{align}
T=&\,\frac{\rho_q q^3}{G_{q^3}({\chi'}_2)}\sum_{h: \tiny{\mbox{odd}};\,h\not=N}
G_{q^3}({\chi'}_{2N}^{h})
\sum_{t\in X}{\chi'}_{2N}^{-h}(\omega^{t+c})\nonumber
\\
=&\,\frac{\rho_q  q^3}{G_{q^3}({\chi'}_2)}\sum_{h=0}^{2N-1}
G_{q^3}({\chi'}_{2N}^{h})
\sum_{t\in X}{\chi'}_{2N}^{-h}(\omega^{t+c})\nonumber\\
&\hspace{0.6cm} -\frac{\rho_q  q^3}{G_{q^3}({\chi'}_2)}
\sum_{h=0}^{N-1}G_{q^3}({\chi'}_{2N}^{2h})
\sum_{t\in X}{\chi'}_{2N}^{-2h}(\omega^{t+c})-
\frac{\rho_q  q^3}{G_{q^3}({\chi'}_2)}
G_{q^3}({\chi'}_{2N}^{N})
\sum_{t\in X}{\chi'}_{2N}^{-N}(\omega^{t+c}). \label{eq:lastpart}
\end{align}
By the orthogonality of characters, we have 
\[
\sum_{h=0}^{2N-1}
G_{q^3}({\chi'}_{2N}^{h})
\sum_{t\in X}{\chi'}_{2N}^{-h}(\omega^{t+c})=2N\psi_{\F_{q^3}}(\omega^c\bigcup_{t\in X}C_t^{(2N,q^3)}). 
\]
Furthermore, noting that $X\equiv I_\cQ\pmod{N}$, we have 
\begin{align*}
\sum_{h=0}^{N-1}G_{q^3}({\chi'}_{2N}^{2h})
\sum_{t\in X}{\chi'}_{2N}^{-2h}(\omega^{t+c})
= N\psi_{\F_{q^3}}(\omega^c \bigcup_{t\in I_\cQ}C_t^{(N,q^3)}). 
\end{align*}
Finally, the last term of \eqref{eq:lastpart} is computed as 
\[
-\rho_q q^3(|S_1|-|S_2|). 
\]
Summing up, we have \eqref{eq:p2} of this lemma. 
 \qed

\vspace{0.3cm}
We  are now ready to prove our main theorem.

\begin{proof}[Proof of Theorem~\ref{thm:main}]
Recall that $\psi_{\F_{q^6}}(\gamma^a D)=P_1+P_2+P_3$ as in \eqref{eq:p1p2p3}, where $P_2=0$, and
$P_1$ and $P_3$ are computed in Lemmas~\ref{lem:res1} and \ref{lem:res2}, respectively. By
\eqref{eqn:sgn2} and Theorem~\ref{thm:main2}, we have
\begin{align*}
\psi_{\F_{q^6}}(\gamma^a D)=&\, \left\{
\begin{array}{ll}
\frac{-q^3+q^2-1}{2},&  \mbox{if $a\pmod*{N}\in S $,}\\
\frac{q^2-1}{2},&  \mbox{if $a\pmod*{N}\not\in S $,}
 \end{array}
\right.\\
&\hspace{0.3cm}+\frac{\rho_q \delta_a q^3}{G_{q^3}({\chi'}_2)}
\psi_{\F_{q^3}}(\omega^c\bigcup_{t\in X}C_t^{(2N,q^3)})-\frac{\rho_q \delta_a q^3}{2G_{q^3}({\chi'}_2)}\psi_{\F_{q^3}}(\omega^c \bigcup_{t\in I_\cQ}C_t^{(N,q^3)})\\
=&\, \left\{
\begin{array}{ll}
\frac{-q^3+q^2-1}{2},&  \mbox{if $a\pmod*{N}\in S $,}\\
\frac{q^2-1}{2},&  \mbox{if $a\pmod*{N}\not\in S $,}
 \end{array}
\right.\\
&\hspace{0.3cm}+\frac{\rho_q \delta_a q^3}{G_{q^3}({\chi'}_2)}\cdot 
\left\{
\begin{array}{ll}
\frac{-1+\eta(2)qG_q(\eta)}{2},& \mbox{ if  $c\pmod*{N}\in I_\cQ$ and $c\pmod*{2N}\in X$,}\\
\frac{-1-\eta(2)qG_q(\eta)}{2},& \mbox{ if  $c\pmod*{N}\in I_\cQ$ and $c\pmod*{2N} \in X+N$,}\\
-\frac{q+1}{2},& \mbox{ if $c\pmod*{N}\in I_s$}, \\
\frac{q-1}{2},& \mbox{ if $c\pmod*{N}\in I_n$},
 \end{array}
\right. \\
&\hspace{0.3cm}-\frac{\rho_q \delta_a q^3}{2G_{q^3}({\chi'}_2)}\cdot 
\left\{
\begin{array}{ll}
-1,&  \mbox{if $c\pmod*{N}\in I_\cQ$,}\\
-q-1,&  \mbox{if $c\pmod*{N}\in I_s$,}\\
q-1,&  \mbox{if $c\pmod*{N} \in I_n$.}
 \end{array}
\right.
\end{align*}
We remark the following facts:   
\begin{align*}
c\pmod*{N}\in I_\cQ&\, \, \Leftrightarrow \, \, a\pmod*{N}\in S,\\
c\pmod*{2N}\in X&\, \, \Leftrightarrow \, \, a\pmod*{N}\in S_1,\\
c\pmod*{2N}\in X+N&\, \, \Leftrightarrow \, \, a\pmod*{N}\in S_2.
\end{align*}
Then, we have 
\begin{align*}
\psi_{\F_{q^6}}(\gamma^a D)=&\, \left\{
\begin{array}{ll}
\frac{-q^3+q^2-1}{2}+\frac{\rho_q \delta_a \eta(2)q^4G_q(\eta)}{2G_{q^3}(\chi_2')},& \mbox{ if $a\pmod*{N}\in S_1$}, \\
\frac{-q^3+q^2-1}{2}-\frac{\rho_q \delta_a \eta(2)q^4G_q(\eta)}{2G_{q^3}(\chi_2')},& \mbox{ if $a\pmod*{N}\in S_2$,}\\
\frac{q^2-1}{2},&  \mbox{  if $a\pmod*{N}\not\in S$.}
 \end{array}
\right.
\end{align*} 
Note that $\eta(2)=-1$ or $1$ depending on $q\equiv 3\pmod{8}$ or 
$q\equiv 7\pmod{8}$. Furthermore, by $G_{q^3}(\chi_2')^2=-q^3$ and 
$G_{q^3}(\chi_2')=G_{q}(\eta)^3$, we have
\[
\frac{\rho_q \eta(2)q^4G_q(\eta)}{G_{q^3}(\chi_2')}=-qG_q(\eta)G_{q^3}(\chi_2')=
-qG_q(\eta)^4=-q^3. 
\]
Hence, 
\[
\psi_{\F_{q^6}}(\gamma^a D)=\left\{
\begin{array}{ll}
-q^3+\frac{q^2-1}{2},& \mbox{ if $a\pmod*{N}\in S_1$ and $a\equiv 0,1\pmod*{4}$,}\\
& \mbox{\hspace{0.4cm} or $a\pmod*{N}\in S_2$ and $a\equiv 2,3\pmod*{4}$}, \\
\frac{q^2-1}{2},&  \mbox{  if $a\pmod*{N}\not\in S$, $a\pmod*{N}\in S_1$ and $a\equiv 2,3\pmod*{4}$,}\\
&\mbox{\hspace{0.4cm}  or $a\pmod*{N}\in S_2$ and  $a\equiv 0,1\pmod*{4}$.}
 \end{array}
\right.
\]
Thus, $D$ takes exactly two nontrivial character values, i.e., the Cayley graph $\Cay(\F_{q^6},D)$ is strongly regular. 

Finally, we show that $\psi_{\F_{q^6}}(\gamma^a D)=-q^3+\frac{q^2-1}{2}$ if and only if
$\gamma^{aq^3}\in D$. To do this, we determine the dual of $D$ explicitly. Let $K_1:=\{0,1\}$ and
$K_2:=\{2,3\}$. Define
\[
J:=\{Ni-(q+1)j\pmod*{4N}\colon  (i,j)\in (K_1\times S_1) \cup (K_2\times S_2)\}  
\]
and $E:=\bigcup_{i \in J}C_i^{(4N,q^6)}$. Then, $E$ is obviously the dual of $D$. Hence,
$\psi_{\F_{q^6}}(\gamma^a D)=-q^3+\frac{q^2-1}{2}$ if and only if $\gamma^{a}\in E$. Since $q^3
I\equiv J\pmod{4N}$, we obtain the assertion. The proof of the theorem is now complete.
\end{proof}

\section{On groups and equivalence with known examples}\label{section:equivalence}

From our construction, it is not difficult to identify a subgroup of the stabilizer of 
a hemisystem arising from Construction \ref{mainconstruction}.

\begin{theorem}
Let $q$ be a prime power congruent to $3$ modulo $4$. Then the hemisystem $\mathcal{M}$ of
$\cQ^-(5,q)$ arising from Construction \ref{mainconstruction} is stabilized by a subgroup of
$\mathsf{P\Gamma O}^-(6,q)$ isomorphic to the metacyclic group $C_{(q^3+1)/4}:C_3$. Moreover:
\begin{enumerate}[(i)]
\item The normal cyclic subgroup of order $C_{(q^3+1)/4}$ is induced by right multiplication by
  $\gamma^{4(q^2+q+1)}$, where $\gamma$ is a primitive element of $\mathbb{F}_{q^6}$.
\item The complementary element of order $3$ on top arises from the map
$x\mapsto x^{q^2}$.
\end{enumerate}
\end{theorem}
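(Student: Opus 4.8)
The plan is to exhibit two explicit $\F_q$-linear bijections of the model $\F_{q^6}$, check that each stabilizes $\cM$ and lies in $\mathsf{P\Gamma O}^-(6,q)$, and then show that together they generate a copy of $C_{(q^3+1)/4}:C_3$. Write $N=q^2+q+1$ and put $g\colon x\mapsto\gamma^{4N}x$ and $\sigma\colon x\mapsto x^{q^2}$; both are $\F_q$-linear (for $\sigma$ because $a^{q^2}=a$ for every $a\in\F_q$). Using $\gamma^{q^3+1}=\omega$ one gets $Q(\gamma^{4N}x)=\Tr_{q^3/q}(\omega^{4N}x^{q^3+1})=\omega^{4N}Q(x)$ with $\omega^{4N}=(\omega^N)^4\in\F_q^\ast$, so $g$ merely rescales $Q$ and hence preserves the elliptic quadric; and $Q(x^{q^2})=\Tr_{q^3/q}\big((x^{q^3+1})^{q^2}\big)=Q(x)$ since $x^{q^3+1}\in\F_{q^3}$ and the trace is invariant under the $q$-power map, so $\sigma$ is even an isometry. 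For orders, $\sigma^3=\mathrm{id}$ and $\sigma$ is not a scalar map, so its image in $\mathsf{P\Gamma O}^-(6,q)$ has order $3$; and the image of $\gamma$ generates the Singer group $\F_{q^6}^\ast/\F_q^\ast$, which is cyclic of order $M=(q^6-1)/(q-1)=N(q^3+1)$, so since $q\equiv 3\pmod{4}$ forces $\gcd(4,q^3+1)=4$ and hence $\gcd(4N,M)=4N$, the image of $\gamma^{4N}$ has order $M/(4N)=(q^3+1)/4$. This already identifies the subgroups claimed in (i) and (ii), pending the invariance check below.

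Next I would verify $g(\cM)=\cM$ and $\sigma(\cM)=\cM$; since $D$ is a union of cosets of $\F_q^\ast$ (indeed $\F_q^\ast\subseteq C_0$), it suffices to check $g(D)=D$ and $\sigma(D)=D$. For $g$ this is immediate, as $\gamma^{4N}$ generates $C_0$ and therefore $g(\gamma^iC_0)=\gamma^iC_0$ for every $i$. For $\sigma$, the map $x\mapsto x^{q^2}$ is a bijection of $\F_{q^6}^\ast$ fixing the unique index-$4N$ subgroup $C_0$, so $\sigma(\gamma^iC_0)=C_{q^2i\bmod 4N}^{(4N,q^6)}$, and it suffices to prove $q^2I\equiv I\pmod{4N}$. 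The two identities that make this work are
\[
q^2\big(Ni-(q+1)j\big)-\big(Ni-(q+1)(q^2j)\big)=Ni(q^2-1)\equiv 0\pmod{4N}
\]
(since $4\mid q^2-1$) and $(q+1)(q^2j)\equiv(q+1)(q^2j\bmod N)\pmod{4N}$ (since $4\mid q+1$, using $q\equiv 3\pmod{4}$). Because each $S_k$ is invariant under multiplication by $q$, hence by $q^2$, modulo $N$, the pair $\big(i,\,q^2j\bmod N\big)$ again lies in $(J_1\times S_1)\cup(J_2\times S_2)$; hence $q^2I\equiv I$ and $\sigma(D)=D$. I expect this modular bookkeeping --- in particular tracking the interaction between the reductions modulo $4$ and modulo $N$, and invoking $q\equiv 3\pmod{4}$ in exactly the two places above --- to be the only genuinely delicate point of the argument.

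Finally, set $H:=\langle g,\sigma\rangle$, which by the previous two paragraphs lies in the stabilizer of $\cM$ inside $\mathsf{P\Gamma O}^-(6,q)$. A direct computation gives $\sigma g\sigma^{-1}\colon x\mapsto\big(\gamma^{4N}x^{q^4}\big)^{q^2}=\gamma^{4Nq^2}x$, i.e.\ $\sigma g\sigma^{-1}=g^{q^2}$; as $\gcd(q^2,(q^3+1)/4)=1$ this is again a generator of $\langle g\rangle$, so $\langle g\rangle\trianglelefteq H$ and the order-$3$ complement acts on it by the $q^2$-power map --- which is precisely assertion (ii). To see $\langle g\rangle\cap\langle\sigma\rangle=1$: the projective point $\la 1\ra$ is fixed by both $\sigma$ and $\sigma^2$, whereas $g^k$ fixes $\la 1\ra$ if and only if $\gamma^{4Nk}\in\F_q^\ast$, i.e.\ $(q^3+1)/4\mid k$, i.e.\ $g^k=\mathrm{id}$; since $\langle\sigma\rangle$ has prime order $3$, the intersection is trivial. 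Therefore $H=\langle g\rangle\rtimes\langle\sigma\rangle\cong C_{(q^3+1)/4}:C_3$, with (i) and (ii) established along the way. The one point requiring care is the isometry-versus-similarity distinction --- $g$ only rescales $Q$ --- which is exactly why the ambient group is taken to be $\mathsf{P\Gamma O}^-(6,q)$ rather than a smaller one.
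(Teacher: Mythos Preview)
Your proof is correct. The paper does not actually supply a proof of this theorem---it only remarks that ``from our construction, it is not difficult to identify a subgroup of the stabilizer''---so your argument fills in precisely the details the authors left implicit: verifying that $x\mapsto\gamma^{4N}x$ is a similarity and $x\mapsto x^{q^2}$ an isometry of $Q$, that both fix $D$ (the key step being $q^2I\equiv I\pmod{4N}$, which uses $4\mid q+1$ and the $q$-invariance of $S_1,S_2$ already noted in the paper), and that the resulting projective transformations generate $C_{(q^3+1)/4}:C_3$ with the stated conjugation action. Your treatment of the modular bookkeeping and the similarity-versus-isometry point is exactly what is needed.
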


The known infinite families of examples are
\begin{itemize}
\item The Cossidente-Penttila examples, with each example stabilized by $\mathsf{P\Sigma L}(2, q^2)$;
\item The BGR-hemisystems arising from the Bamberg-Giudici-Royle (BGR) construction from \cite{Bamberg:2010aa}.
\end{itemize}

The order of $\mathsf{P\Sigma L}(2,q^2)$ is $q^2(q^4-1) \log_p(q)$. So in particular, it is not
divisible by $q^2-q+1$. The generic BGR construction yields many hemisystems, including the
Cossidente-Penttila ones. Those that are not of Cossidente-Penttila type have a normal subgroup of
order $q^2$.

\begin{theorem}[{\cite[Theorem 3.3]{Bamberg:2013aa}}]
Let $\mathcal{H}$ be a BGR-hemisystem of $\herm(3,q^2)$. Then the full stabilizer of $\mathcal{H}$
contains $T\rtimes K$ where $T$ is an elementary abelian group of order $q^2$ and $K$ is a subgroup
of $\mathsf{Sp}(4,q)$.
\end{theorem}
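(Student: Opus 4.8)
The approach is to read the group off the BGR construction itself. In \cite{Bamberg:2010aa} the hemisystem $\mathcal{H}$ is produced as a union of orbits of an explicit autotopism group $H$ of the flock generalized quadrangle $\herm(3,q^2)$, so $H$ stabilizes $\mathcal{H}$ automatically and the entire content of the theorem is the structural identification of a subgroup $T\rtimes K\le H$ of the stated shape. First I would set up $\herm(3,q^2)$ as the (classical, linear-flock) flock generalized quadrangle with base point $(\infty)$: recall its elation group $E$ of order $q^5$ with centre $Z$ of order $q$, so that $V:=E/Z$ is elementary abelian of order $q^4$. The commutator map $E\times E\to Z$ descends to a nondegenerate alternating form on $V$, turning $V\cong\F_q^4$ into a symplectic space, and the Kantor family $\{A_0,\dots,A_q\}$ projects onto a set of $q+1$ totally isotropic lines of $V$. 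This is the symplectic geometry in which $K$ will act.

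Next I would identify the two factors. For $T$, I would take the distinguished member $A_\infty$ of the Kantor family: it has order $s=q^2$, and since $q$ is odd the classical elation group $E$ has exponent $p$, so $A_\infty$ is elementary abelian, giving $T=A_\infty$ of order $q^2$ lying in $H$ as a group of elations. For $K$, I would take the part of $H$ fixing $(\infty)$ and stabilizing the defining $q$-clan; since it normalizes $E$ and preserves the commutator form on $V=E/Z$, it acts on $V$ by isometries of the alternating form, yielding an embedding $K\hookrightarrow\mathsf{Sp}(4,q)$. Because $K$ normalizes $E$ and fixes $A_\infty$ setwise, it normalizes $T$, so $T\rtimes K\le H\le\mathrm{Stab}(\mathcal{H})$, and this group visibly sits inside $\mathsf{Aut}(\herm(3,q^2))=\mathsf{P\Gamma U}(4,q)$. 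The semidirect decomposition and the order count $|T|=q^2$ then follow immediately from the flock-GQ theory recalled above.

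The main obstacle is verifying that $K$ lands in $\mathsf{Sp}(4,q)$ itself rather than merely in the similitude or semilinear group $\mathsf{GSp}(4,q)$ or $\mathsf{\Gamma Sp}(4,q)$: one must check that the autotopisms comprising $K$ act on $V=E/Z$ as genuine isometries of the commutator form, with the scaling of $Z$ and any field-automorphism contributions accounted for, and that $K$ normalizes $A_\infty$ rather than merely permuting the Kantor family. This bookkeeping of the autotopism action on the elation group is the crux; confirming that $T$ is elementary abelian (which uses $q$ odd and the exponent-$p$, symplectic-type structure $q^{1+4}$ of the classical elation group) and that both factors stabilize the union of $H$-orbits defining $\mathcal{H}$ are then routine consequences of the setup.
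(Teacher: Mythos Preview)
There is nothing in this paper to compare your proposal against: the theorem is quoted verbatim from \cite[Theorem~3.3]{Bamberg:2013aa} and used as a black box in Section~\ref{section:equivalence} to rule out the BGR family. No proof, sketch, or indication of method appears here; the authors only need the consequence that a BGR-hemisystem has a normal subgroup of order $q^2$ in its stabilizer, so that $q^2$ divides the stabilizer order.

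Your sketch is a plausible reconstruction of how the cited result is obtained in \cite{Bamberg:2013aa}, via the elation group of the flock generalized quadrangle and the induced symplectic action on $E/Z$, and you correctly flag the one genuine issue (showing $K$ lands in $\mathsf{Sp}(4,q)$ rather than a similitude or semilinear overgroup). But since the present paper offers no argument of its own, any assessment of agreement or divergence would have to be made against \cite{Bamberg:2013aa} rather than against this paper.
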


For many reasons, the exceptional case is $q=3$. Here the stabilizer of the Segre hemisystem is
$\mathsf{PSL}(3,4).2$ in its exceptional embedding in $\mathsf{P\Gamma U}(4,3)$. The order of
$\mathsf{PSL}(3,4)$ is $20160=3^2\cdot (3^2-3+1)\cdot 320$, and it will turn out to be the only
occasion when a subgroup $M$ of $\mathsf{PGU}(4,q)$ has order divisible by $q^2(q^2-q+1)$, apart
from subgroups containing $\mathsf{SU}(3,q)$. It is not difficult to see that no hemisystems admit
$\mathsf{SU}(3,q)$, since this group acts transitively on totally isotropic lines of $\herm(3,q^2)$.

The following lemma follows directly from \cite[Theorem 4.2]{Bamberg:2008ab}.

\begin{lemma}
Let $q$ be an odd prime power with $q\ge 5$. Let $M$ be a maximal subgroup of $\mathsf{PGU}(4,q)$
with order divisible by $q^2-q+1$. Then $M$ is the stabilizer of a non-degenerate hyperplane and is
isomorphic to $\mathsf{GU}(3,q)$.
\end{lemma}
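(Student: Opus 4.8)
The plan is to obtain this lemma as a direct reading of the classification of maximal subgroups of finite classical groups whose order is divisible by a primitive prime divisor, in the form recorded as \cite[Theorem~4.2]{Bamberg:2008ab}.

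First I would isolate the arithmetic that makes $q^2-q+1$ decisive. Let $r$ be a primitive prime divisor of $q^6-1$, i.e.\ a prime with $r\mid q^6-1$ but $r\nmid q^i-1$ for $1\le i\le 5$; such an $r$ exists by Zsigmondy's theorem since $q\ge 5$ (in particular $q\ne 2$, the sole exception at exponent $6$). From $q^6-1=(q^3-1)(q+1)(q^2-q+1)$ and the fact that $r$ divides neither $q^3-1$ nor $q^2-1$ (hence not $q+1$), we get $r\mid q^2-q+1$; note also $r\ne 3$, since a primitive prime divisor cannot equal $3$. Consequently every $M\le\mathsf{PGU}(4,q)$ with $(q^2-q+1)\mid|M|$ has $r\mid|M|$, and in $\mathsf{PGU}(4,q)$ a Sylow $r$-subgroup is cyclic, contained in (the image of) the maximal torus of order $(q^3+1)(q+1)$ of $\mathsf{GU}(4,q)$ coming from the partition $(3,1)$.

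Next I would feed this into \cite[Theorem~4.2]{Bamberg:2008ab}. Viewing $\mathsf{GU}(4,q)\le\mathsf{GL}(4,q^2)$, the prime $r$ is a primitive prime divisor of $(q^2)^3-1$, and since $3>4/2$ it is a \emph{large} one, so an element of order $r$ is of ppd type for that classification. Running through the admissible candidates for a maximal subgroup $M\le\mathsf{PGU}(4,q)$ of order divisible by $r$: the stabilizers of nonzero totally isotropic subspaces, the imprimitive subgroups $\mathsf{GU}(1,q)\wr S_4$ and $\mathsf{GU}(2,q)\wr S_2$, the non-degenerate decomposition $\mathsf{GU}(2,q)\perp\mathsf{GU}(2,q)$, and the tensor-decomposable subgroups all have order coprime to $q^2-q+1$ (as $q^2-q+1$ is coprime to $q$, and $\gcd(q^2-q+1,q^4-1)\mid 3$ with $r\ne 3$); the field-extension subgroups $\mathsf{GU}(2,q^2).2$ and $\mathsf{GL}(2,q^2).2$ and the normalizer of a torus of order $q^3+1$ either miss $r$ or fail to be maximal (the torus normalizer being absorbed into the point stabilizer below); the subfield and classical subgroups $\mathsf{Sp}(4,q)$, $\mathsf{SO}^{\pm}(4,q)$, $\mathsf{GU}(4,q_0)$ have order not divisible by $q^2-q+1$; and each almost simple candidate in class $\mathcal{S}$ (the relevant ones being $\mathsf{PSL}(2,7)$, $A_7$, $\mathsf{PSU}(4,2)$, and, only when $q=3$, $\mathsf{PSL}(3,4).2$) has order either too small or not divisible by $q^2-q+1$ once $q\ge 5$. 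The single surviving case is the stabilizer of a non-degenerate $1$-space of the unitary geometry — equivalently, of a non-degenerate hyperplane — which is $\mathsf{GU}(1,q)\perp\mathsf{GU}(3,q)\cong\mathsf{GU}(3,q)$ inside $\mathsf{PGU}(4,q)$; and since $|\mathsf{GU}(3,q)|=q^3(q+1)(q^2-1)(q^3+1)$ is divisible by $q^3+1$, hence by $q^2-q+1$, this case genuinely occurs, which gives the lemma.

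I would close by recording why $q\ge 5$ is needed: for $q=3$ the primitive prime divisor $r=7$ of $3^6-1$ still exists, but now the exceptional maximal subgroup $\mathsf{PSL}(3,4).2\le\mathsf{PGU}(4,3)$ has order $2\cdot 20160$, divisible by $3^2-3+1=7$, and it is not of the form $\mathsf{GU}(3,q)$; this is precisely the small-field line of \cite[Theorem~4.2]{Bamberg:2008ab}. The only genuine work in the proof is the elimination of the class $\mathcal{S}$ candidates, which is exactly what the cited theorem carries out; I expect that bookkeeping, rather than any new idea, to be the only obstacle, so the argument amounts to quoting \cite[Theorem~4.2]{Bamberg:2008ab} and reading off the surviving geometric subgroup.
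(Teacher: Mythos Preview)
Your proposal is correct and takes essentially the same approach as the paper, which simply states that the lemma ``follows directly from \cite[Theorem~4.2]{Bamberg:2008ab}'' without further argument. You supply the arithmetic and the case-elimination that the paper leaves implicit, but the underlying route---locate a primitive prime divisor of $q^6-1$ inside $q^2-q+1$, then read off the surviving maximal subgroup from the cited classification---is identical.
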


Now the order of $\mathsf{GU}(3,q)$ is $q^3(q+1)^2(q^2-1)(q^2-q+1)$. By \cite[Theorem
4.1]{Bamberg:2008ab}, we have the following:
\begin{lemma}
Let $q$ be an odd prime power with $q\ge 5$. Let $M$ be a maximal subgroup of $\mathsf{GU}(3,q)$
with order divisible by $q^2-q+1$. Then one of the following occurs:
\begin{enumerate}[(i)]
\item $\mathsf{SU}(3,q)\trianglelefteq M$;
\item $q=5$ and $M\cong 6.S_7$; 
\item $M\cong \mathsf{\Gamma U}(1,q^3)$.
\end{enumerate}
\end{lemma}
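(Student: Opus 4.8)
As the text indicates, the statement is a specialisation of \cite[Theorem~4.1]{Bamberg:2008ab}, so the quickest route is simply to restrict that classification to the hypothesis that $q^2-q+1$ divides $|M|$; what I will do is indicate why that hypothesis isolates precisely the three listed families. The arithmetic that drives everything is the identity $q^2-q+1=\Phi_6(q)$, the sixth cyclotomic polynomial evaluated at $q$. By Zsigmondy's theorem (the only exception, $q=2$, being excluded since $q\ge 5$) the integer $q^6-1$ has a primitive prime divisor $\ell$, and any such $\ell$ divides $\Phi_6(q)=q^2-q+1$, satisfies $\ell\equiv 1\pmod{6}$ (hence $\ell\ge 7$), and divides none of $q-1$, $q+1$, $q^2-1$, $q^3-1$. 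So if $q^2-q+1\mid|M|$ then $\ell\mid|M|$, and the first step is to read off from Aschbacher's theorem which maximal subgroups of $\mathsf{GU}(3,q)$ can have order divisible by such an $\ell$.

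Running through the classes: the reducible subgroups (the stabiliser of an isotropic point, of order $q^3(q+1)(q^2-1)$, and of a non-degenerate point, of order $q(q-1)(q+1)^3$), the imprimitive subgroup $\mathsf{GU}(1,q)\wr S_3$ of order $6(q+1)^3$, the extraspecial-type normaliser when it exists, and the orthogonal subgroup $\mathsf{SO}(3,q)$ of order $q(q^2-1)$ all have orders that are either bounded or built only from $q$ and $q\pm 1$, hence not divisible by $\ell$. The subfield subgroups $\mathsf{GU}(3,q_0)$, with $q=q_0^{r}$ and $r$ an odd prime, need a little more care: when $r=3$ the primitive-prime-divisor condition forces $\mathrm{ord}_\ell(q_0)=18$, so $\ell\nmid q_0^6-1$ and hence $\ell\nmid|\mathsf{GU}(3,q_0)|$ (all element orders there divide $q_0^6-1$), while when $r\ge 5$ one compares orders directly, $q^2-q+1=q_0^{2r}-q_0^{r}+1>|\mathsf{GU}(3,q_0)|$. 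Among the geometric classes this leaves only the degree-$3$ field-extension subgroup $\mathsf{\Gamma U}(1,q^3)$, of order $3(q^3+1)=3(q+1)(q^2-q+1)$ --- case (iii) --- together with the overgroups of $\mathsf{SU}(3,q)$ --- case (i).

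Finally, the almost simple (class $\mathcal S$) subgroups. Consulting the list of such maximal subgroups of $\mathsf{GU}(3,q)$, their socles (modulo scalars) are among $\mathsf{PSL}(2,7)$, $\mathsf{A}_6$, $\mathsf{A}_7$, and a subfield unitary group $\mathsf{PSU}(3,q_0)$ --- the last already treated under the subfield class. Since $\ell\ge 7$ is coprime to $2$ and to $q+1$ (the latter because $\gcd(q^2-q+1,q+1)\mid 3$), $\ell$ must divide the order of the relevant covering group of the socle; as $|\mathsf{A}_6|=360$ is coprime to $7$, this forces $\ell=7$. Then $q^2-q+1\mid|M|$ with $|M|$ dividing a fixed multiple of $|\mathsf{A}_7|=2520$, so $q^2-q+1$ is bounded and only finitely many $q$ remain; checking these (needing $7\mid q^2-q+1$, i.e.\ $q\equiv 3,5\pmod{7}$, and that the remaining prime-power factors of $q^2-q+1$ still divide $|M|$) leaves $q=5$ alone, where $q^2-q+1=21$ divides $|\mathsf{A}_7|$ and $\mathsf{A}_7<\mathsf{PSU}(3,5)$ is maximal; the corresponding maximal subgroup of $\mathsf{GU}(3,5)$, obtained by lifting through its centre of order $6$ and the outer automorphism, is $6.S_7$, which is case (ii). The $\mathsf{PSL}(2,7)$ candidate never survives: for $q=5$ it lies inside $\mathsf{A}_7$ and so is not maximal, and for every other $q$ with $7\mid q^2-q+1$ the cofactor $(q^2-q+1)/7$ fails to divide $|\mathsf{PSL}(2,7)|=168$. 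I expect the genuine obstacles to be exactly this class-$\mathcal S$ bookkeeping --- distinguishing actual embeddings from mere divisibility of $|\mathsf{GU}(3,q)|$, confirming maximality, and pinning down the extension type $6.S_7$ --- together with the subfield case for $r\ge 5$; for these I would lean on the published subgroup tables for the low-dimensional finite classical groups rather than reconstruct the embeddings by hand.
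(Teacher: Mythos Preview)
Your proposal is correct and consistent with the paper's approach: the paper offers no argument beyond the bare citation ``By \cite[Theorem~4.1]{Bamberg:2008ab}, we have the following,'' whereas you supply the Zsigmondy/Aschbacher case analysis that explains \emph{why} the cited classification, restricted to subgroups of order divisible by $\Phi_6(q)$, collapses to the three listed outcomes. Your one soft spot --- the crude order comparison $q_0^{2r}-q_0^r+1>|\mathsf{GU}(3,q_0)|$ for the subfield case with $r\ge 5$ --- is borderline for very small $q_0$, but the same primitive-prime-divisor argument you used for $r=3$ (namely $\mathrm{ord}_\ell(q_0)=6r$, so $\ell\nmid q_0^6-1$) handles all odd primes $r$ uniformly and removes the need for that inequality.
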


So if $M$ is a maximal subgroup of $\mathsf{PGU}(4,q)$, $q$ odd and $q\ge 5$, with order divisible
by $q^2(q^2-q+1)$, then $M$ contains $\mathsf{SU}(3,q)$. Therefore, the $m$-ovoids we have
constructed in this paper are not of BGR type for $q\ge 5$.

\section{Open problems}

In the introduction, we mentioned that there are examples of hemisystems of $\herm(3,q^2)$ admitting
a cyclic group of order $q^2-q+1$ for most of the small values of $q$. In \cite[Section
4.1]{Bamberg:2013aa}, the computational data, including the orders of the stabilisers of the
examples, was presented and which we repeat below:

\begin{center}

\begin{tabular}{c|c|c}
$q$&$q^2-q+1$&Stabiliser\\
\hline
3&7&$\mathrm{PSL}(3,4). 2$\\
5&21&$3\cdot A_7\cdot 2$\\
7&43&$43: 6$\\
9&73&$73:6$\\
11&111&$111:6$, $333:3$\\
17&273&$273: 3$\\
19&343&$1715: 6$\\
23&507&$507: 6$\\
27&703& $703: 3$\\
\hline
\end{tabular}
\end{center}

Notice that there were no examples for $q=13$, nor for $q=25$, and so one might believe that there
are no examples for $q\equiv 1\pmod{12}$. In this paper, we have given a construction for each $q$
congruent to $3\pmod 4$, and so an open problem remains whether a similar construction can work for
$q\equiv 5,9\pmod{12}$, say. We have attempted to adapt our methods for $q$ congruent to
$1\pmod{4}$, but to no avail. The essential problem is the computation of Gauss sums. In the case
where $q\equiv 3\pmod{4}$, our hemisystem admits $C_{(q^3+1)/4}$, and it was enough to consider
Gauss sums of order $4(q^2+q+1)$. But, known examples of hemisystems for $q\equiv 1\pmod 4$ seem to admit
the smaller cyclic group $C_{q^2-q+1}$ instead. Thus, relative to the $3$ modulo $4$ case, we need
to compute Gauss sums of larger order. We could not find an effective way to compute the character
values, and furthermore, we do not know what kind of structure is behind the examples. The authors
 believe a more enlightening distinction could be made in this case: the examples could be further
divided according to $q$ modulo $3$. It seems the $q\equiv 0\pmod{3}$ examples take on a different
nature, and in particular, the field automorphisms fix a unique orbit of $C_{q^2-q+1}$. Hence, we
give the following refinement of the open problem given in \cite[Section 4.1]{Bamberg:2013aa}:

\begin{problem}
Does there exist a hemisystem invariant under a cyclic group of order $q^2-q+1$ for each odd prime
power $q$ satisfying $q\equiv 1\pmod{4}$ and $q\equiv 0,2\pmod{3}$?
\end{problem}


\end{document}